\newtheorem{thm}{Theorem}[section]
\newtheorem{ques}[thm]{Question}
\newtheorem{cor}[thm]{Corollary}
\newtheorem{lem}[thm]{Lemma}
\theoremstyle{definition}
\title{Semi-regular tilings of the hyperbolic plane}
\author{Basudeb Datta}
\author{Subhojoy Gupta}
\address{Department of Mathematics, Indian Institute of Science, Bangalore 560012, India.}
\email{dattab@iisc.ac.in, subhojoy@iisc.ac.in}
\begin{document}
\setcounter{tocdepth}{4}
\begin{abstract} 
A semi-regular tiling of the hyperbolic plane is a tessellation by regular geodesic polygons with the property that each vertex has the same \textit{vertex-type}, which is a cyclic tuple of integers that determine the number of sides of the polygons surrounding the vertex. We determine combinatorial criteria for the existence, and uniqueness, of a semi-regular tiling with a given vertex-type, and pose some open questions. 
\end{abstract}

\maketitle

\section{Introduction}

A \textit{tiling} of a surface is a partition into (topological) polygons (the \textit{tiles}) which are non-overlapping (interiors are disjoint) and such that  tiles  which touch, do so either at exactly one vertex, or along exactly one common edge. The \textit{vertex-type} of a vertex $v$ is a cyclic tuple of integers $[k_1,k_2,\ldots ,k_d]$ where $d$ is the degree (or valence) of $v$, and each $k_i$  (for $1\leq i\leq d$)  is the number of sides (the \textit{size}) of the $i$-th polygon around $v$, in either clockwise or  {counter-clockwise order}. A \textit{semi-regular} tiling on a surface of constant curvature (\textit{eg.}, the round sphere, the Euclidean plane or the hyperbolic plane) is one in which each polygon is regular, each edge is a geodesic, and  the vertex-type is identical for each vertex (see Figure 1).
Two tilings are  \textit{equivalent}  if they are combinatorially isomorphic, that is, there is a homeomorphism of the surface to itself that takes vertices, edges and tiles of one tiling to those of the other.  In fact,  two semi-regular tilings of the hyperbolic plane are equivalent if and only if there is an  isometry of the hyperbolic plane that realizes the combinatorial isomorphism between them (see Lemma \ref{cor-leml}).

Semi-regular tilings of the {Euclidean} plane are called \textit{Archimedean} tilings, and  have been studied from antiquity. It is known that there are exactly eleven  such tilings, up to scaling --  see \cite{Datta2}, and \cite{G-S} for an informative survey.  
Also classical is the fact that the semi-regular tilings of the round sphere  are the following : the boundaries of  five famed {Platonic solids} and thirteen {Archimedean solids}  (which are each \textit{uniform}, that is, the tiling has a vertex-transitive automorphism group), the often overlooked pseudo-rhombicuboctahedron (see \cite{Gerr}), and  two infinite families -- the prisms and antiprisms. 

For a semi-regular tiling of the hyperbolic plane, it  is easy to verify that the vertex-type  $\mathsf{k} = [k_1,k_2,\ldots ,k_d]$ of any vertex must satisfy
\begin{equation}\label{asum}
\alpha(\mathsf{k}) := \displaystyle\sum\limits_{i=1}^d \frac{k_i-2}{k_i} > 2
\end{equation}
since the sum of the interior angles of a regular hyperbolic polygon is strictly \textit{less} than those of its Euclidean counterpart. 
We shall call  $\alpha(\mathsf{k})$ the \textit{angle-sum} of the cyclic tuple $\mathsf{k}$. 

There are plenty of examples of semi-regular tilings of the hyperbolic plane. Indeed, the Fuchsian triangle groups $G(p,q)$ generated by reflections on the sides of a hyperbolic triangle with angles $\frac{\pi}{2}, \frac{\pi}{p}$ and $\frac{\pi}{q}$ generate a semi-regular tiling with vertex-type $[p^q] = [\underbrace{p,p,\ldots,p}_{\text{q times}}]$ whenever $\frac{1}{p} + \frac{1}{q} < \frac{1}{2}$ (see \cite{Kulkarnietal}).  This tiling is also called \textit{regular} since all tiles are the same regular $p$-gon; moreover, the automorphism group is vertex-transitive, so it is a uniform tiling.

More generally, as a consequence of a result in \cite{Edmondsetal},  given any cyclic tuple of \textit{even} integers $\mathsf{k} = [2m_1,2m_2,\ldots ,2m_d]$ satisfying the angle-sum condition \eqref{asum}, there exists a semi-regular tiling of the hyperbolic plane with vertex-type $\mathsf{k}$. This tiling is obtained by first generating a tiling by  reflections on sides of a hyperbolic $d$-gon with interior angles $\pi/m_1, \pi/m_2,\ldots,\pi/m_d$, and then taking the \textit{dual} tiling  (see Corollary \ref{eventype} in \S6).  Note that in \cite{Edmondsetal} they considered the dual problem, that is, the existence of a tiling where  each tile is a  $d$-gon with the vertices having prescribed valencies; the Fuchsian reflection groups described above are known as Dyck groups. \vspace{.1in}

\begin{figure}
  \centering
  \includegraphics[scale=.42]{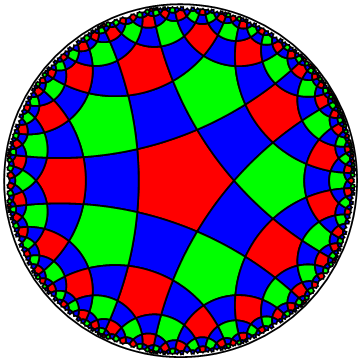}\vspace{.1in}
  \caption{A semi-regular tiling of the hyperbolic plane with vertex-type $\mathsf{k}= [4,5,4,5]$, a tuple that satisfies Condition (A). Theorem \ref{thm-uniq} asserts that in fact, this is the unique semi-regular tiling with this vertex-type.  }
\end{figure}

A basic question then is: 

\begin{ques}[Semi-regular Tiling Problem]\label{ques1}  Given a cyclic tuple of integers $\mathsf{k} = [k_1,k_2,\ldots ,k_d]$ satisfying $\alpha(\mathsf{k}) > 2$, does there exist a semi-regular tiling of the hyperbolic plane with vertex-type $\mathsf{k}$? If so, is the tiling unique?\end{ques}

\medskip

In this article we approach this as a \textit{combinatorial} problem and provide  \textit{sufficient} criteria for the existence and uniqueness of such semi-regular tilings for $d\geq 4$, and a complete list of vertex-types of semi-regular tilings with degree $d=3$.  Although the cases of degree $3$ and $4$ have been discussed earlier (see, for example, \cite{G-S-book} or \cite{Mitchell}), it is difficult to find complete proofs in the literature.   Note that semi-regular tilings, when treated as  combinatorial objects (namely, the graph obtained as a $1$-skeleton),   are also known as \textit{semi-equivelar maps} in the literature (see, for example, \cite{Datta2}). \vspace{.1in}


In what follows we say $u_1\cdots u_l$ \textit{appears} in a cyclic tuple $\mathsf{k}= [k_1, k_2,\ldots, k_d]$ if the integers $u, \cdots,u_l$ appear in consecutive order, that is,  $\mathsf{k}$ can be expressed in the form $[u_1,\ldots u_l, k_{l+1}, k_{l+2},\ldots k_d]$ or equivalently  $[u_l, u_{l-1}, \ldots, u_1, k_{l+1},  \ldots, k_{d}]$.
We introduce the following simple combinatorial condition:

\begin{itemize}
\vspace{.1in}


\item[(A)] \textit{ If $xy$ and $yz$ appear in the cyclic tuple  $[k_1, k_2,\ldots, k_d]$, then so does $xyz$.}
\vspace{.1in}
\end{itemize}

We then prove the following {sufficient} criterion for the existence of triangle-free semi-regular tilings:

\begin{thm}[Existence criteria - I]\label{thm1}
Consider a cyclic tuple  $\mathsf{k} = [k_1, k_2,... ,k_d]$ such that 
\begin{itemize}
\item the angle-sum $\alpha(\mathsf{k}) > 2$,
\item Condition (A) is satisfied, 
\item $d\geq 4$,  and  each $k_i\geq 4$.
\end{itemize}

Then there exists a semi-regular tiling of the hyperbolic plane with vertex-type $\mathsf{k}$.
\end{thm}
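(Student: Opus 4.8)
The goal is to construct an infinite tiling of $\mathbb{H}^2$ by regular geodesic polygons in which every vertex has link $\mathsf{k}$. I would attack this in two stages: first build the *combinatorial* object — an infinite simplicial/polygonal complex that is a simply-connected surface, with every vertex having vertex-type $\mathsf{k}$ — and then realize it geometrically in $\mathbb{H}^2$, invoking the earlier Lemma \ref{cor-leml} (that combinatorial equivalence of semi-regular hyperbolic tilings is realized by isometry) at the end only to know the realization is essentially canonical.

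**Step 1: The inductive construction of combinatorial annuli.** The natural approach is to build the tiling outward, disk by disk. Start with a single vertex $v_0$ and place $d$ polygons around it with sizes $k_1,\dots,k_d$ in cyclic order; this produces a topological disk $D_1$ whose boundary $\partial D_1$ is a cycle. The inductive hypothesis is that we have a disk $D_n$ all of whose interior vertices have vertex-type $\mathsf{k}$, and whose boundary vertices have a *partial* link that is a subword of $\mathsf{k}$ — precisely, an "unfinished corner" consisting of a string of consecutive entries $k_i \cdots k_j$ of the cyclic tuple, with at least one more polygon still to be added. The key task is then to attach polygons along $\partial D_n$ to complete the links of all boundary vertices, producing $D_{n+1}$, while guaranteeing that the new boundary vertices again have this partial-link property. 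This is where Condition (A) does the work: when two consecutive boundary polygons meet at a boundary vertex $w$ and contribute sizes $x$ and $y$ to its link, and the next polygon to be attached (determined by walking around $\partial D_n$) would contribute some $z$, Condition (A) is exactly what guarantees that the triple $xyz$ occurs in $\mathsf{k}$ — hence the attachment is consistent with $w$ eventually having vertex-type $\mathsf{k}$, and there is no contradiction forcing, say, two polygons of the same "slot" to be glued. I would need a careful case analysis of the types of boundary vertices (those needing one more polygon, versus those needing several), and the condition $k_i \ge 4$ together with $d \ge 4$ is used to ensure the boundary cycle is a genuine simple closed curve (no two polygons in $D_{n+1}$ share two edges or wrap around) — triangles are excluded precisely because a triangle can close up a boundary vertex too fast and create identifications.

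**Step 2: Taking the limit and geometrizing.** Once the $D_n$ are built with $D_n \subset D_{n+1}$, set $X = \bigcup_n D_n$. Then $X$ is a simply-connected (each $D_n$ is a disk) triangle-free polygonal surface without boundary in which every vertex has vertex-type $\mathsf{k}$. To realize it in $\mathbb{H}^2$: because $\alpha(\mathsf{k}) > 2$, for each $k$ appearing in $\mathsf{k}$ one can choose a regular hyperbolic $k$-gon whose interior angle is $\frac{2\pi}{\text{(number of entries in }\mathsf{k})}$ — wait, more precisely one must choose a common edge-length $\ell$ so that the regular $k$-gon with side $\ell$ has interior angle $\theta_k$ with $\sum_{\text{link}} \theta_{k_i} = 2\pi$; such an $\ell$ exists by a continuity/intermediate-value argument since as $\ell \to 0$ the angles approach the Euclidean values (whose sum exceeds $2\pi$ is *false* — it equals $(d-2)\pi$ type sum; rather one checks the angle sum decreases continuously from the Euclidean value, which exceeds $2\pi$ by $\pi(\alpha(\mathsf{k}) - 2) > 0$, to $0$) and is strictly monotincreasing decreasing in... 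I would make this precise: the function $\ell \mapsto \sum_i \theta_{k_i}(\ell)$ is continuous, decreasing, with limit $\pi\,\alpha(\mathsf{k}) > 2\pi$ as $\ell \to 0$ and limit $0$ as $\ell \to \infty$, so there is a unique $\ell$ giving angle sum exactly $2\pi$. Fixing this $\ell$, develop $X$ into $\mathbb{H}^2$: start with one polygon, attach neighbors by isometries matching edges; the angle condition guarantees no conical excess/defect at vertices and triangle-freeness plus simple connectivity of $X$ guarantee the developing map is well-defined; a standard completeness/covering argument shows it is a bijection onto $\mathbb{H}^2$. The image is the desired semi-regular tiling.

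**The main obstacle.** The real difficulty is Step 1 — proving that the outward layer-by-layer attachment never forces an inconsistency, i.e., that the combinatorial induction actually goes through. One must show that the recipe "complete each boundary vertex's link using consecutive entries of $\mathsf{k}$" is globally coherent around the whole cycle $\partial D_n$: the slot of $\mathsf{k}$ assigned to a newly-added polygon as seen from its two endpoints must agree, and the newly created boundary vertices must again be "unfinished corners" of the correct type rather than already-overfull or closed-up vertices. Condition (A) is the precise combinatorial hypothesis that makes this local-to-global step work, and verifying that it suffices — handling in particular vertices of $\partial D_n$ whose current link is a long subword, and ruling out the degenerate closings that triangles would allow — is the technical heart of the argument. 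I would expect to organize this as a lemma stating that any "admissible partial disk" extends to a larger one, proved by a finite induction walking around the boundary cycle.
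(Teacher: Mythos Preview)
Your proposal is correct and follows essentially the same layer-by-layer strategy as the paper: build an exhaustion $X_0 \subset X_1 \subset \cdots$ by completing fans around boundary vertices, using Condition (A) exactly at the step where a third polygon must be fitted next to two already-placed ones, and using $k_i\ge 4$, $d\ge 4$ to control the boundary combinatorics. The paper's execution sharpens your inductive hypothesis to the concrete invariant that every boundary vertex of $X_i$ has valence exactly $2$ or $3$ (so the ``long subword'' case you worry about never arises), processes the boundary vertices in a fixed cyclic order with a carefully chosen start (valence $3$) and end (valence $2$) to make the final closing-up step a clean second application of Condition (A), and carries the hyperbolic geometry along from the start rather than geometrizing at the end---but these are refinements of presentation rather than a different argument.
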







For the case of tilings with triangular tiles, we consider degree $d\geq 6$, and an additional combinatorial condition:

\begin{itemize}
\vspace{.1in}

\item[(B)] \textit{If the triples $x3y$ and $3yz$ appear in the cyclic tuple $\mathsf{k}$, then so does $x3yz$.}
\vspace{.1in}

\end{itemize}


We shall prove:
\begin{thm}[Existence criteria - II]\label{thm2}
Consider a cyclic tuple  $\mathsf{k} = [k_1, k_2,... ,k_d]$ such that 
\begin{itemize}
\item the angle-sum $\alpha(\mathsf{k}) > 2$,
\item Conditions (A) and (B) are satisfied,  and 
\item $d\geq 6$. 
\end{itemize}

Then there exists a semi-regular tiling of the hyperbolic plane with vertex-type $\mathsf{k}$. 
\end{thm}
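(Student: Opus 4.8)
\emph{Proof strategy.} The plan is to reduce the theorem to a purely combinatorial statement and then build the required tiling corona by corona, with Conditions~(A) and~(B) playing the role of the local compatibility rules that keep the construction consistent. First, it suffices to produce a \emph{combinatorial} semi-regular tiling of the plane: a locally finite cell decomposition $K$ of $\mathbb{R}^2$ in which every $2$-cell is a combinatorial $k$-gon for some integer $k\ge 3$ and every vertex has degree $d$ and vertex-type $\mathsf{k}$. Indeed, for each $k\ge 3$ let $\theta_k(\ell)$ be the interior angle of a regular hyperbolic $k$-gon all of whose edges have length $\ell$; the function $\theta_k$ is continuous and strictly decreasing on $(0,\infty)$, with $\theta_k(\ell)\to\tfrac{(k-2)\pi}{k}$ as $\ell\to 0^{+}$ and $\theta_k(\ell)\to 0$ as $\ell\to\infty$. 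Hence $\ell\mapsto\sum_{i=1}^{d}\theta_{k_i}(\ell)$ is continuous and strictly decreasing, tends to $\pi\,\alpha(\mathsf{k})>2\pi$ as $\ell\to 0^{+}$ and to $0$ as $\ell\to\infty$, so there is a unique $\ell^{*}>0$ with $\sum_{i=1}^{d}\theta_{k_i}(\ell^{*})=2\pi$. Replacing each $k$-cell of $K$ by a regular hyperbolic $k$-gon of edge length $\ell^{*}$ and gluing isometrically along shared edges yields a hyperbolic metric on $\mathbb{R}^2$ that is smooth at each vertex (the cone angle being $\sum_{i=1}^{d}\theta_{k_i}(\ell^{*})=2\pi$) and complete; such a surface is isometric to $\mathbb{H}^2$, and the cells become the desired semi-regular tiling.

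\emph{The combinatorial construction.} We build $K$ as an increasing union $K=\bigcup_{n\ge 1}D_n$ of finite cell-complex disks. Let $D_1$ be a base vertex $v_0$ with $d$ tiles of sizes $k_1,\dots,k_d$ arranged cyclically around it in the order given by $\mathsf{k}$; this is a disk. Inductively, assume $D_n$ is a disk in which every interior vertex has degree $d$ and vertex-type $\mathsf{k}$, and every boundary vertex is incident to a fan of tiles whose sizes form a consecutive subtuple of $\mathsf{k}$. To obtain $D_{n+1}$ we attach tiles across the boundary edges of $D_n$ and in the remaining angular sectors at its boundary vertices, so as to complete the link of each boundary vertex of $D_n$. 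Three things must be verified: (i) the size of each newly attached tile is determined by the data already present, so the procedure is well-defined; (ii) after the attachments, each former boundary vertex has link a single cycle realizing $\mathsf{k}$ exactly, hence degree $d$ and vertex-type $\mathsf{k}$; and (iii) no unintended identifications occur, so $D_{n+1}$ is again a disk with the inductive properties. When every tile involved has at least four sides this is exactly what Condition~(A) provides — it is the rule by which one reads off from $\mathsf{k}$ how a fan of incident tiles continues and closes up consistently — and this is the argument that proves Theorem~\ref{thm1}.

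\emph{Triangular tiles.} The new feature, relative to Theorem~\ref{thm1}, is that a triangular tile $T$ has only three vertices, so it can be incident to \emph{two consecutive} boundary vertices $u,v$ of $D_n$ at once (sharing a boundary edge $uv$ with a tile of $D_n$), and then its placement constrains the links being completed at $u$, at $v$, and at the third vertex $w'$ of $T$ simultaneously. For instance, if the tile of $D_n$ along $uv$ has size $x$ and the tiles that must be attached just beyond $T$ at $u$ and at $v$ have sizes $y$ and $z$, then the links at $u$ and $v$ force the triples $x3y$ and $x3z$ to appear in $\mathsf{k}$ while the link at $w'$ forces $y3z$; the three completions are compatible precisely when $\mathsf{k}$ contains a longer pattern overlapping these, which is exactly what Condition~(B) asserts. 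Thus (B) is the compatibility rule for propagating a fan unambiguously across a triangle, the analogue of (A) for larger tiles, and it is likewise what prevents two triangles at a common vertex from being identified with one another or from folding $D_{n+1}$ back onto itself, so that point~(iii) survives in the presence of triangles; the bound $d\ge 6$ is used to guarantee that every vertex still has enough incident tiles for these fan-completion arguments to run (the vertex-types involving triangles with $d\le 5$ fall outside this construction). Finally, each $D_n$ is a finite disk with $D_n\subset D_{n+1}$, every cell of $K=\bigcup_n D_n$ eventually lies in the interior of some $D_m$, and the vertex degrees and tile sizes are uniformly bounded, so $K$ is a locally finite combinatorial semi-regular tiling of $\mathbb{R}^2$ with vertex-type $\mathsf{k}$; the reduction step then finishes the proof.

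\emph{Main obstacle.} I expect the crux to be the consistency of the extension step — points~(ii) and~(iii) above — once triangles are allowed: one must check that no matter how triangular tiles meet the current boundary, possibly along two consecutive vertices and possibly adjacent to further triangles, the links of all affected vertices can be completed simultaneously to a single copy of $\mathsf{k}$ and the enlarged complex remains an embedded disk rather than folding onto itself. Conditions~(A) and~(B), together with $d\ge 6$, are calibrated exactly for this, and verifying that they suffice in every local configuration is where the real work lies.
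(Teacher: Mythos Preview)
Your overall strategy is the paper's: build disks $X_0\subset X_1\subset\cdots$ by completing fans at boundary vertices, then geometrize via the unique edge-length $\ell^*$. But your account of where Condition~(B) enters is off, and this is exactly the point where the case analysis you defer must be done. You place the obstruction at the moment a triangle $T$ is attached across a boundary edge $uv$, with a three-way constraint at $u$, $v$, and the apex involving $x3y$, $x3z$, $y3z$; Condition~(B), however, says ``$x3y$ and $3yz$ imply $x3yz$,'' which does not directly resolve that configuration. In the paper the obstruction surfaces one layer \emph{later}: once $T$ has been placed (as part of some fan $F_j$), its apex $q$ becomes a valence-$4$ boundary vertex of $X_{i+1}$, with tiles of sizes $x,3,y$ already around it (the triangle and its two neighbours from the just-completed fans). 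When building $X_{i+2}$, the fan at the vertex preceding $q$ contributes a fourth tile of size $z$, so $q$ now has valence $5$ and one needs $x3yz$ in $\mathsf{k}$ to complete its fan. Condition~(A) gives $x3y$ and $3yz$ separately, and then (B) yields $x3yz$.

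The structural ingredient you are missing is the inductive invariant that organizes this: every boundary vertex of $X_i$ has valence $2$, $3$, or $4$, and every valence-$4$ boundary vertex is the apex of a triangular tile whose other two vertices lie in the \emph{interior} of $X_i$. This replaces the ``all boundary valences are $2$ or $3$'' invariant from the triangle-free case, and maintaining it is where $d\ge 6$ is actually used: with at least six tiles in every fan, each wedge added at a boundary vertex carries enough spokes that a portion of its outer arc survives on $\partial X_{i+1}$ (it cannot be entirely swallowed by the two neighbouring fans), so no boundary vertex can acquire valence $\ge 5$ and the invariant propagates. Your sketch gestures at ``enough incident tiles for the fan-completion arguments to run,'' but without this specific invariant and the accompanying case split on boundary valence the inductive step does not close.
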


We also prove the following:

\begin{thm}[Uniqueness criteria]\label{thm-uniq}
 Let $\mathsf{k}$ be a cyclic tuple which satisfies the hypotheses of Theorem \ref{thm1} or Theorem \ref{thm2}. Suppose two consecutive elements  of $\mathsf{k}$  uniquely determine the rest of the cyclic tuple, i.e., given a pair $xy$ that appears in $\mathsf{k}$, there is a unique way of expressing $\mathsf{k}$ as the cyclic tuple $[x, y, k_3, \dots, k_d]$. Then there exists a unique semi-regular tiling $T$ whose vertex-type is $\mathsf{k}$. Moreover, in this case, the tiling  $T$ is uniform, i.e., it has vertex transitive automorphism group.
\end{thm}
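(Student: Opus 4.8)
The plan is to start from the observation that Theorem~\ref{thm1} or Theorem~\ref{thm2} already supplies \emph{one} semi-regular tiling $T_0$ of $\mathbb{H}^2$ with vertex-type $\mathsf{k}$, so that what remains is to show that every semi-regular tiling with this vertex-type is combinatorially isomorphic to $T_0$, and that $\mathrm{Aut}(T_0)$ acts transitively on vertices. I would extract both from a single statement, which I will call the \emph{Continuation Principle}: if $T_1,T_2$ are semi-regular tilings of $\mathbb{H}^2$ with vertex-type $\mathsf{k}$ (and $\mathsf{k}$ satisfies the rigidity hypothesis of the theorem), and $\vec{e}_1,\vec{e}_2$ are directed edges of $T_1,T_2$ such that the ordered pair of sizes of the two tiles flanking $\vec{e}_1$ (the one on the left and the one on the right, for a fixed orientation of $\mathbb{H}^2$) equals that for $\vec{e}_2$, then there is a unique orientation-preserving combinatorial isomorphism $\Phi\colon T_1\to T_2$ with $\Phi(\vec{e}_1)=\vec{e}_2$.

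Granting this, the theorem follows quickly. For uniqueness, let $T$ be any semi-regular tiling with vertex-type $\mathsf{k}$; choose a directed edge $\vec{e}$ of $T_0$ with flanking-tile-size pair $(a,b)$, and a directed edge $\vec{e}\,'$ of $T$ with the same flanking pair. Such $\vec{e}\,'$ exists because around every vertex of $T$ the tiles occur in the cyclic arrangement $\mathsf{k}$, so every ordered adjacent pair occurring in $\mathsf{k}$ is realized at each vertex (and, as one checks for any tiling of an oriented surface by reading off the two endpoints of an edge, the reverse of any occurring ordered adjacent pair occurs as well, so the pair realized by $\vec{e}$ does occur in $T$). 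The Continuation Principle then gives $T_0\cong T$. For uniformity, given vertices $u,w$ of $T_0$, pick directed edges $\vec{e}_u$ with head $u$ and $\vec{e}_w$ with head $w$ having equal flanking-tile-size pairs --- again possible since $u$ and $w$ have the same vertex-type $\mathsf{k}$ --- and apply the Continuation Principle with $T_1=T_2=T_0$ to obtain an automorphism of $T_0$ carrying $u$ to $w$. Finally, Lemma~\ref{cor-leml} upgrades combinatorial uniqueness to uniqueness up to isometry.

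To prove the Continuation Principle I would build $\Phi$ by induction over combinatorial balls, mirroring the layer-by-layer construction used to prove Theorems~\ref{thm1} and~\ref{thm2}. One maintains a nested exhaustion $D_1\subseteq D_2\subseteq\cdots$ of $T_1$ by finite simply connected sub-patches (unions of tiles homeomorphic to closed discs), together with compatible combinatorial embeddings $\Phi_n\colon D_n\hookrightarrow T_2$ sending $\vec{e}_1$ to $\vec{e}_2$. Take $D_1$ to be the closed flower of tiles around the head vertex $v_1$ of $\vec{e}_1$: since the cyclic sequence of tile-sizes around $v_1$ and around the head of $\vec{e}_2$ is $\mathsf{k}$ in both cases and the flanking pairs of $\vec{e}_1,\vec{e}_2$ agree, the rigidity hypothesis makes the two flowers correspond, compatibly with $\vec{e}_1\mapsto\vec{e}_2$, giving $\Phi_1$. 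For the inductive step, choose a boundary vertex $v$ of $D_n$ at which $D_n$ contains a nonempty but proper contiguous arc of the tiles incident to $v$; the next tile $F$ to be appended shares with $D_n$ a boundary edge $e$ at $v$ together with the tile of $D_n$ immediately preceding it around $v$. Since we now know the sizes of two tiles consecutive around $v$, the rigidity hypothesis determines the size of $F$, and likewise the size of the tile of $T_2$ that must be attached at $\Phi_n(v)$ across $\Phi_n(e)$; these sizes coincide, and since a combinatorial self-map of a regular polygon is pinned down by the image of one directed edge, $\Phi_n$ extends over $F$. Appending tiles one at a time along connected boundary arcs keeps each $D_n$ a disc and exhausts $T_1$, so the $\Phi_n$ assemble into $\Phi$.

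I expect the main obstacle to be the consistency of these forced extensions: one must verify that the local prescriptions never clash, so that the $\Phi_n$ genuinely glue to an isomorphism of the \emph{whole} tilings and not merely of arbitrarily large patches --- i.e.\ that the continuation has no monodromy. The mechanism is that $T_1$ and $T_2$ are locally isomorphic everywhere (the link of every vertex is the same $d$-cycle carrying the data $\mathsf{k}$, every edge lies on exactly two of the prescribed regular polygons), so a local isomorphism can be transported along any edge-path; and because $\mathbb{H}^2$ is simply connected, any two edge-paths between given local configurations differ by a finite sequence of elementary moves --- pushing across one tile, or pivoting around one vertex --- each of which preserves the transported value precisely because the rigidity hypothesis makes two consecutive tiles determine the whole flower. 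Concretely, in the inductive step one must check that whenever the newly attached tile $F$ meets $D_n$ in more than the arc at $v$, the extra incidences are already respected by $\Phi_n$; this is a purely local verification around the vertices where the extra contact occurs, again using that two consecutive tiles fix the flower. Once this consistency is in place, uniqueness of $\Phi$ subject to $\vec{e}_1\mapsto\vec{e}_2$ is immediate, since an orientation-preserving automorphism fixing a directed edge fixes every tile by continuation; this completes the proof of Theorem~\ref{thm-uniq}.
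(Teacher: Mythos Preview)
Your proposal is correct and rests on the same core observation as the paper: the rigidity hypothesis on $\mathsf{k}$ means that once two consecutive tiles around a vertex are known, the entire fan is forced, so the tiling can be rebuilt uniquely from any directed edge. The paper phrases this as ``any semi-regular tiling $T$ can be thought of as arising from our construction, and the construction has no choices''; you package the same idea more abstractly as a Continuation Principle for pairs of tilings. Both arguments build the isomorphism layer by layer and both need exactly the consistency check you flag (that forced extensions never clash), though the paper leaves this implicit.

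The one genuine difference is in the uniformity argument. The paper exhibits, for each edge $UV$, the hyperbolic reflection across its perpendicular bisector as an automorphism of $T$ swapping $U$ and $V$ (verified by running the layer construction from $U$ and from $V$ and matching), and then uses connectivity of the $1$-skeleton to conclude transitivity. Your route is more direct: apply the Continuation Principle with $T_1=T_2=T$ to send any vertex to any other in one stroke. The paper's argument has the pleasant by-product of producing an explicit isometric automorphism (a reflection) realizing each adjacent swap, whereas yours gives the combinatorial automorphism immediately and then invokes Lemma~\ref{cor-leml} to upgrade to an isometry; the end result is the same.
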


As a corollary, we obtain the uniqueness of the regular tilings generated by the Fuchsian triangle-groups mentioned above (see \S4):

\begin{cor}\label{cor1} A semi-regular tiling of the hyperbolic plane with vertex-type $[p^q]$ (where $\frac{1}{p} + \frac{1}{q} < \frac{1}{2}$) is unique and uniform. In fact, any pair of such tilings are related by an isometry of the hyperbolic plane, that takes vertices and edges of one to vertices and edges, respectively, of the other. 
\end{cor}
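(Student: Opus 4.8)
The plan is to derive Corollary \ref{cor1} directly from the existence Theorems \ref{thm1} and \ref{thm2}, the Uniqueness Theorem \ref{thm-uniq}, and Lemma \ref{cor-leml}; the only work is to check that the cyclic tuple $\mathsf{k} = [p^q]$ meets the hypotheses of those theorems, and all the combinatorial conditions turn out to hold trivially because every entry of $\mathsf{k}$ is equal.

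First I would record the identity
\[
\alpha([p^q]) = q\cdot\frac{p-2}{p} > 2
\iff (p-2)(q-2) > 4
\iff \tfrac{1}{p}+\tfrac{1}{q} < \tfrac{1}{2},
\]
so the angle-sum hypothesis $\alpha(\mathsf{k}) > 2$ is exactly the stated constraint. Since $p\geq 3$, the inequality $(p-2)(q-2) > 4$ forces a short case split: if $p=3$ then $q\geq 7$; if $4\leq p\leq 6$ then $q\geq 4$; and if $p\geq 7$ then $q\geq 3$. In every case $q\geq 3$, so the triple $ppp$ --- the unique length-three string occurring in $[p^q]$ --- does appear, hence Condition (A) holds; and since the only strings available consist of $p$ repeated, Condition (B) holds vacuously as well. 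Likewise the extra hypothesis of Theorem \ref{thm-uniq} is immediate: the only way to express $[p^q]$ starting from a prescribed consecutive pair $xy = pp$ is $[p,p,\dots,p]$, so a consecutive pair determines the whole cyclic tuple.

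With the hypotheses in hand I would split into cases. If $p\geq 4$, then $q\geq 4$ and each $k_i = p\geq 4$, so Theorem \ref{thm1} produces a semi-regular tiling with vertex-type $[p^q]$ (namely the one cut out by the Fuchsian triangle group $G(p,q)$). If $p=3$, then $q\geq 7\geq 6$, so Theorem \ref{thm2} applies. The remaining possibility allowed by $(p-2)(q-2) > 4$ is $q = 3$, which forces $p\geq 7$; the degree-three tilings $[p^3]$ lie outside the range of Theorems \ref{thm1}--\ref{thm2} and are instead covered by the explicit classification of degree-three vertex-types in \S4, which shows that $[p^3]$ is realised by a (unique) semi-regular tiling. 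In all cases, Theorem \ref{thm-uniq} --- or the \S4 analysis when $q=3$ --- shows that the tiling $T$ with vertex-type $[p^q]$ is unique and uniform, i.e.\ its automorphism group acts transitively on vertices. Finally, if $T$ and $T'$ are two semi-regular tilings with vertex-type $[p^q]$, uniqueness says they are combinatorially isomorphic, and Lemma \ref{cor-leml} upgrades this combinatorial isomorphism to an isometry of the hyperbolic plane carrying vertices to vertices and edges to edges, which is the last assertion of the corollary.

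I do not expect a genuine obstacle, since all the content is in the cited theorems; the only points needing care are (i) remembering that $p\geq 3$ is forced by ``$p$-gon'', so that the angle-sum inequality yields the case split above, and (ii) the boundary case $q=3$, which the existence theorems do not reach and which must be dispatched separately by the degree-three analysis of \S4.
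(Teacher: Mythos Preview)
Your argument for the cases $p=3$ and $p,q\geq 4$ is exactly the paper's: verify Conditions (A), (B), and the ``pair determines the tuple'' hypothesis (all trivially, since every entry equals $p$), then invoke Theorem~\ref{thm1} or~\ref{thm2} together with Theorem~\ref{thm-uniq}, and finally upgrade combinatorial equivalence to an isometry via Lemma~\ref{cor-leml}.

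The gap is in the case $q=3$. You appeal to ``the explicit classification of degree-three vertex-types'' to conclude that $[p^3]$ is realised by a \emph{unique} semi-regular tiling, but that classification (Theorem~\ref{thm0}) is an existence statement only; it says nothing about uniqueness or uniformity. And Theorem~\ref{thm-uniq} cannot be invoked here either, because its hypotheses explicitly require those of Theorem~\ref{thm1} or~\ref{thm2}, which in turn require $d\geq 4$ or $d\geq 6$. So for $q=3$ you have no uniqueness argument at all. The paper closes this case differently: a semi-regular tiling of vertex-type $[p^3]$ is the \emph{dual} of one of vertex-type $[3^p]$, and since $p\geq 7$ the latter falls under the already-handled case $p=3$ (with the roles of $p$ and $q$ swapped), hence is unique; passing to duals preserves equivalence, so the $[p^3]$ tiling is unique as well.
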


\noindent\textit{Remark.} It is a folklore result that for a general vertex-type, uniqueness does not hold. Indeed, in \S5, we shall show that there could be \textit{infinitely many}  pairwise distinct semi-regular tilings of the hyperbolic plane with the same vertex-type. \vspace{.1in}

\smallskip

In \S6, we provide the following necessary and sufficient conditions for the existence of semi-regular tilings with degree $d=3$: 

\begin{thm}\label{thm0} 
A cyclic tuple $\mathsf{k} =[k_1,k_2,k_3]$ is the vertex-type of a semi-regular tiling of the hyperbolic plane if and only if one of the following holds:
\begin{itemize}
\item $\mathsf{k} = [p,p,p]$ where $p\geq 7$,
\item $ \mathsf{k} = [2n,2n,q]$ where $2n \neq q$, and $\frac{1}{n} + \frac{1}{q} < \frac{1}{2}$, or
    \item $ \mathsf{k} = [2\ell,2m,2n]$ where $\ell,m, n$ are distinct, and $\frac{1}{\ell} + \frac{1}{m} + \frac{1}{n} < 1$.
\end{itemize}
\end{thm}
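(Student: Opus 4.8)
The plan is to prove the two implications separately, reducing the ``if'' direction to constructions that are essentially already in hand. For necessity, suppose $T$ is a semi-regular tiling of the hyperbolic plane with vertex-type $\mathsf{k}=[k_1,k_2,k_3]$; since $d=3$ the cyclic tuple is just the multiset $\{k_1,k_2,k_3\}$. The key elementary observation is the following: if a tile $F$ is a $v$-gon and $F_1,\dots,F_v$ are the tiles met across its $v$ edges, listed cyclically, then $F_i\neq F_{i+1}$ for all $i$ (two tiles of a tiling meet in at most one edge, and no tile is glued to itself along an edge), while at the vertex of $F$ between edges $i$ and $i+1$ the three incident tiles are $F,F_i,F_{i+1}$, so $\{|F_i|,|F_{i+1}|\}$ is the multiset obtained from $\{k_1,k_2,k_3\}$ by deleting one copy of $v$. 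I would then split into cases. If $k_1=k_2=k_3=p$, then \eqref{asum} reads $3(p-2)/p>2$, i.e.\ $p\geq 7$. If exactly two of them agree, write $\mathsf{k}=[a,a,b]$ with $a\neq b$; applied to an $a$-gon the observation gives $\{|F_i|,|F_{i+1}|\}=\{a,b\}$, so the sizes of $F_1,\dots,F_a$ alternate strictly between $a$ and $b$ around a cycle of length $a$, whence $a$ is even, say $a=2n$, and \eqref{asum} rewrites as $\tfrac1n+\tfrac1q<\tfrac12$ with $q:=b$. If $k_1,k_2,k_3$ are pairwise distinct, running the alternation argument for a $k_i$-gon for each $i$ shows every $k_i$ is even, say $(k_1,k_2,k_3)=(2\ell,2m,2n)$ with $\ell,m,n$ distinct, and \eqref{asum} rewrites as $\tfrac1\ell+\tfrac1m+\tfrac1n<1$. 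This gives exactly the three listed possibilities.

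For sufficiency I would exhibit a tiling in each case. For $\mathsf{k}=[p,p,p]$ with $p\geq 7$: the regular tiling $\{p,3\}$ generated by the Fuchsian triangle group $G(p,3)$ (as recalled in \S1) has vertex-type $[p,p,p]$, and $\tfrac1p+\tfrac13<\tfrac12$ is precisely the condition $p\geq 7$. For $\mathsf{k}=[2\ell,2m,2n]$ with $\ell,m,n$ distinct and $\tfrac1\ell+\tfrac1m+\tfrac1n<1$: this is a cyclic tuple of even integers with $\alpha(\mathsf{k})=3-(\tfrac1\ell+\tfrac1m+\tfrac1n)>2$, so Corollary \ref{eventype} applies directly. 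For $\mathsf{k}=[2n,2n,q]$ with $2n\neq q$ and $\tfrac1n+\tfrac1q<\tfrac12$: when $q$ is even this is again an even tuple and Corollary \ref{eventype} applies, so the only genuinely new case is $q$ odd. Here $\tfrac1n+\tfrac1q<\tfrac12$ is exactly the condition for the regular tiling $\{n,q\}$ (regular $n$-gons, $q$ of them around each vertex) to exist in the hyperbolic plane --- in particular it forces $n\geq 3$ --- and I would obtain the desired tiling by truncating $\{n,q\}$: each valence-$q$ vertex becomes a $q$-gon and each $n$-gon becomes a $2n$-gon, and at each new vertex, which sits on an old edge $e$ near an old endpoint $v$ of $e$, the incident tiles are the $q$-gon replacing $v$ together with the two $2n$-gons coming from the two old $n$-gons sharing $e$, so the vertex-type is $[2n,2n,q]$.

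It remains to check that this truncated complex is realized by a genuine semi-regular tiling of $\mathbb{H}^2$ with regular geodesic faces, and this is the step I expect to be the main obstacle, because --- unlike the other two families --- these tilings do not arise from reflections in a hyperbolic triangle all of whose angles are submultiples of $\pi$. I would handle it in one of two ways: either identify the truncated complex with the Wythoffian tiling of the $(2,n,q)$-triangle reflection group (whose faces are regular by construction, and which is hyperbolic precisely when $\tfrac1n+\tfrac1q<\tfrac12$), or invoke the general combinatorial-to-geometric realization used for Theorems \ref{thm1} and \ref{thm2}: the truncated complex is a simply connected planar complex of constant vertex-type $[2n,2n,q]$ with $\alpha(\mathsf{k})>2$, so, letting $\theta_k(s)$ denote the interior angle of a regular geodesic $k$-gon of side length $s$, an intermediate-value argument produces $s_0>0$ with $2\theta_{2n}(s_0)+\theta_q(s_0)=2\pi$ (the left-hand side tends to $\pi\,\alpha(\mathsf{k})>2\pi$ as $s_0\to 0$ and to $0$ as $s_0\to\infty$), and developing the resulting metric complex yields an isometry onto $\mathbb{H}^2$. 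Apart from this realization point everything is routine: the parity/alternation argument for necessity is elementary (the one hypothesis genuinely used there is that tiles are glued ``nicely'', which is what gives $F_i\neq F_{i+1}$), and the $[p^q]$ family and the all-even family are covered by results already available.
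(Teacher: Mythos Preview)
Your proposal is correct and follows essentially the same route as the paper: the parity/alternation argument around a tile for necessity, and for sufficiency the triangle-group tiling for $[p,p,p]$, truncation of the $[n^q]$ tiling for $[2n,2n,q]$, and Corollary~\ref{eventype} for the all-even distinct case. Your worry about geometrizing the truncated complex is exactly what Lemma~\ref{cor-leml} disposes of (and is what the paper invokes there), so that step is not in fact an obstacle.
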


\medskip

\begin{figure}
  \centering
  \includegraphics[scale=.4]{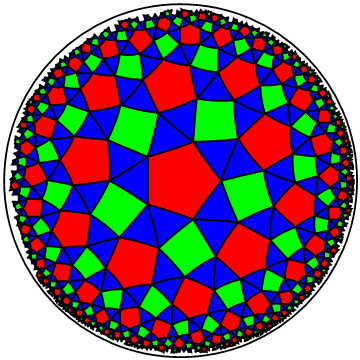}\vspace{.1in}
  \caption{A semi-regular tiling with vertex-type [5,3,4,3,3]. This cyclic tuple does not satisfy Condition (A). }
\end{figure}

We now mention some questions that are still open.\vspace{.1in}

First, our constructions for Theorems \ref{thm1} and \ref{thm2}  yields tilings that can have different symmetries, that is, could be invariant under different Fuchsian groups (or none at all) some of which have \textit{compact quotients}. 
Thus, we can ask:

\begin{ques} Given a cyclic tuple $\mathsf{k}=[k_1,k_2,\ldots, k_d]$, which \textit{compact} oriented hyperbolic surfaces have a semi-regular tiling with  vertex-type $\mathsf{k}$? How many such tilings does such a surface have? 
\end{ques}

For  vertex-types of the form $[p^q]$ or $[2m_1,2m_2,\ldots ,2m_d]$, the question above was answered in \cite{Kulkarnietal} and \cite{Edmondsetal}, where it was shown that such a tiling exists whenever the appropriate Euler characteristic count holds. The case when the surface is a torus or Klein bottle, and  the vertex-type is that of a Euclidean tiling, was dealt with in \cite{Datta1b}; see also \cite{Wilson}, \cite{B-K} and \cite{P-W}. The work in \cite{Karabas} enumerates semi-regular tilings of surfaces of low genera which are also uniform. The recent work in \cite{Arun} also addresses the question above.   \vspace{.1in}

Second, the existence criterion, namely Condition (A),  in Theorem \ref{thm1} is not \textit{necessary} -- see, for example, Figure 2. For general degree $d$,  not \textit{all} tuples $\mathsf{k}$  that satisfy the angle-sum condition \eqref{asum} can be realized by a semi-regular tiling --  see \cite{Datta1c} for other necessary conditions. 

However, a set of necessary and sufficient conditions akin to Conditions (A) and (B) seems elusive. Although the work in \cite{Renault} develops algorithms for some related problems, we do not know if the answer to the following question is known:

\begin{ques} Is there a set of necessary and sufficient conditions that the cyclic tuple $\mathsf{k}$ needs to satisfy, to have a positive answer to the Semi-Regular Tiling Problem (see Question \ref{ques1})? Moreover, is the Semi-Regular Tiling Problem decidable? Namely, is there an algorithm to test if a given cyclic tuple $\mathsf{k}$ is the vertex-type of a semi-regular tiling of the hyperbolic plane, that terminates in finitely many steps?   \vspace{.1in}  
\end{ques}

\section{A tiling construction: Proof of Theorem \ref{thm1}}

In this section we prove Theorem \ref{thm1}, by describing a constructive procedure to tile the hyperbolic plane so that each vertex has the same vertex-type.
As we shall see, our method shall work provided the cyclic tuple $\mathsf{k}$ satisfies the hypotheses of Theorem \ref{thm1}, including Condition (A) mentioned in the introduction. Moreover, the algorithm will be free of choices under an additional hypothesis, proving the uniqueness statement of Theorem \ref{thm-uniq} in that case.

\subsection{Initial step: a fan} 
Let $\mathsf{k} = [k_1,k_2,\ldots, k_d]$ be a cyclic tuple of integers satisfying the hypotheses of Theorem \ref{thm1}.  Throughout, a \textit{closed hyperbolic disk} will mean a hyperbolic surface (i.e.\ with its interior supporting a smooth metric of constant curvature $-1$) homeomorphic to a closed disk. 
We shall begin with a closed hyperbolic disk $X_0$ constructed by attaching  $d$ regular polygons having numbers of sides $k_1,k_2, \ldots ,k_d$ respectively, to each other, around an initial vertex $V$.  In what follows, we shall call such a configuration of tiles around a vertex a \textit{fan}. \vspace{.1in}

A standard continuity argument implies:

\begin{lem}\label{leml}
There is a unique choice of a side-length $l_0>0$ for the polygons in $X_0$ such that the total angle around the vertex $V$ is exactly $2\pi$.
\end{lem}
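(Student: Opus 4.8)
The statement is: given $d$ regular hyperbolic polygons with $k_1, \ldots, k_d$ sides, all sharing a common side-length $l$, arranged in a fan around a vertex $V$, there is a unique $l = l_0 > 0$ making the total interior angle at $V$ equal $2\pi$. This is exactly the kind of claim a "standard continuity argument" handles, so my plan is to make that argument precise via the intermediate value theorem applied to a suitable monotone continuous function.

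The key input is the behavior of the interior angle of a regular hyperbolic $k$-gon as a function of its side-length. Let me denote by $\theta_k(l)$ the interior angle at a vertex of a regular hyperbolic $k$-gon with side-length $l$. First I would recall (or derive from the hyperbolic law of cosines, decomposing the $k$-gon into $2k$ right triangles with a vertex at the center) the standard facts: $\theta_k$ is a continuous, strictly decreasing function of $l \in (0, \infty)$; as $l \to 0^+$, the polygon becomes Euclidean in the small-scale limit, so $\theta_k(l) \to \frac{(k-2)\pi}{k}$; and as $l \to \infty$, the polygon degenerates to an ideal polygon, so $\theta_k(l) \to 0$. The explicit formula, $\cos(\theta_k(l)/2) = \cosh(l/2)\sin(\pi/k)$, makes all three properties immediate, and strict monotonicity follows since $\cosh$ is strictly increasing on $(0,\infty)$ and $\arccos$ is strictly decreasing.

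Next I define $\Theta(l) = \sum_{i=1}^d \theta_{k_i}(l)$, the total angle around $V$ when all polygons have side-length $l$. This is a finite sum of continuous strictly decreasing functions, hence continuous and strictly decreasing. By the limits above, $\Theta(l) \to \sum_{i=1}^d \frac{(k_i-2)\pi}{k_i} = \pi\,\alpha(\mathsf{k})$ as $l \to 0^+$, and $\Theta(l) \to 0$ as $l \to \infty$. The hypothesis $\alpha(\mathsf{k}) > 2$ gives $\pi\,\alpha(\mathsf{k}) > 2\pi$, so $\Theta$ takes values both above and below $2\pi$; by the intermediate value theorem there is some $l_0$ with $\Theta(l_0) = 2\pi$, and by strict monotonicity this $l_0$ is unique. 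Finally I should note that a fan of $d$ regular polygons with this common side-length, glued edge-to-edge around $V$, genuinely forms an embedded closed hyperbolic disk $X_0$ — the total angle being exactly $2\pi$ means the metric extends smoothly across $V$, and there is no overlap since the angles fit together without excess.

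I do not expect a serious obstacle here; the only mild subtlety is justifying the two limiting values of $\theta_k(l)$, which is why invoking the closed-form expression $\cos(\theta_k(l)/2) = \cosh(l/2)\sin(\pi/k)$ is the cleanest route — it reduces everything to elementary calculus. One could alternatively argue the $l \to 0$ limit via a rescaling/comparison-geometry argument (small hyperbolic polygons are nearly Euclidean), but the formula is shorter and self-contained. The $\alpha(\mathsf{k}) > 2$ hypothesis is used precisely and only to place $2\pi$ strictly inside the open interval $(0, \pi\alpha(\mathsf{k}))$ of attained values.
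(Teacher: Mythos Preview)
Your approach is essentially identical to the paper's: both argue that the total angle $\Theta(l)$ is continuous and strictly monotone in $l$, with limit $\pi\alpha(\mathsf{k})>2\pi$ as $l\to 0^+$ and limit $0$ as $l\to\infty$, and then invoke the intermediate value theorem. The paper states the monotonicity and limits qualitatively (``elementary hyperbolic trigonometry''), whereas you supply an explicit closed form --- which is a nice addition, except that the formula you wrote is not quite right. In the right triangle with vertices at the center $O$, a polygon vertex $V$, and the midpoint $M$ of an adjacent side (right angle at $M$), the side of length $l/2$ is $VM$, which is opposite the angle $\pi/k$ at $O$; the identity $\cos A=\cosh a\,\sin B$ then gives
\[
\cos\!\Big(\frac{\pi}{k}\Big)=\cosh\!\Big(\frac{l}{2}\Big)\sin\!\Big(\frac{\theta_k(l)}{2}\Big),
\qquad\text{i.e.}\qquad
\sin\!\Big(\frac{\theta_k(l)}{2}\Big)=\frac{\cos(\pi/k)}{\cosh(l/2)},
\]
not $\cos(\theta_k/2)=\cosh(l/2)\sin(\pi/k)$ (indeed, your version would force $\cos(\theta_k/2)\to\infty$ as $l\to\infty$). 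With this correction all the qualitative properties you need --- continuity, strict decrease, and the two limits --- follow immediately, and the rest of your argument stands.
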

\begin{proof}
For sufficiently small $l>0$, a regular hyperbolic polygon of $k_i$ sides and side length $l$ will be approximately Euclidean, and each interior angle will be close to $\pi {(k_i-2)}/{k_i}$. This makes the total angle $\theta(l)$  at vertex $V$ close  to $\pi \sum\limits_{i=1}^d \frac{k_i-2}{k_i}  >2\pi$, since the vertex-type satisfies the angle-sum condition to be a hyperbolic tiling. 
On the other hand, for large $l \gg 0$, as the vertices of the regular polygons tend to the ideal boundary,  each interior angle will be close to $0$, since for any ideal polygon adjacent sides bound cusps. The total angle  $\theta(l)$ is then close to $0$.
 In fact, elementary hyperbolic trigonometry  shows that $\theta$ is a continuous and  strictly monotonic function of $l$. Hence, there is a unique intermediate value $l_0 \in \mathbb{R}^+$ for which $\theta(l_0) = 2\pi$.
\end{proof}

\begin{figure}
  \centering
  \includegraphics[scale=.35]{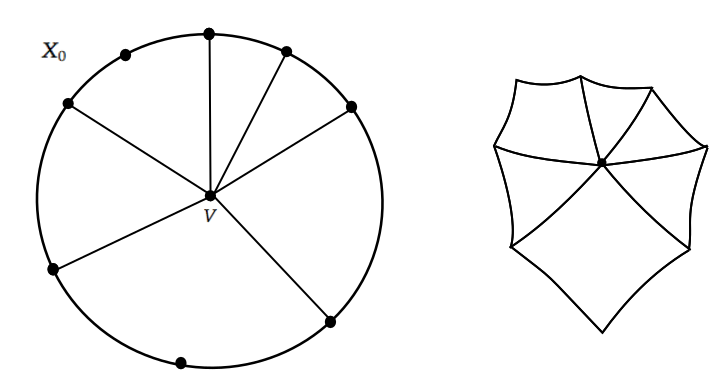}\vspace{.1in}
  \caption{A representation of a fan for the vertex-type [4,3,3,3,4,3]. The hyperbolic realization of this fan is shown on the right. }
\end{figure}

\noindent\textit{Remark.} Throughout this article, we shall use polygons with side-lengths equal to the $l_0$ obtained in the previous lemma, for a vertex-type $\mathsf{k}$. Moreover, we shall represent a fan as a closed disk with an appropriate division into wedges, together with vertices added to resulting boundary arcs to add more sides -- see Figure 3. From such a diagram it is straight-forward to recover the hyperbolic fan: we realize each of the resulting topological polygons as a regular hyperbolic polygon of side-length $l_0$. \vspace{.1in}

The first part of Question \ref{ques1} is equivalent to asking:
 \begin{ques} When can a hyperbolic fan be extended to a semi-regular tiling of the hyperbolic plane?
 \end{ques}

\medskip

It is easy to see that the following two properties hold for the tiled surface $X_0$. (Recall that by our assumptions on $\mathsf{k}$, there are no triangular tiles.) \vspace{.1in}

\noindent \textbf{Property 1.}\textit{ All boundary vertices have valence $2$ or $3$. Moreover, there is at least one boundary vertex of valence $2$ and one of valence $3$.} \vspace{.1in}

(Note that the second statement in fact follows from a weaker property that not \textit{all} tiles are triangles.)\vspace{.1in}

\noindent \textbf{Property 2.}  \textit{The tiled surface is a closed hyperbolic disk.}\vspace{.1in}

\subsection{Inductive step}

The tiling is constructed layer by layer, namely, we shall find a sequence of closed hyperbolic disks
\begin{equation*}
X_0 \subset X_1 \subset  X_2 \subset  \cdots  \subset X_i \subset X_{i+1} \subset \cdots 
\end{equation*}
each equipped with a tiling, such that their union $X_\infty$  is isometric to the entire hyperbolic plane, and the interior vertices of each $X_i$ have vertex-type $[k_1,k_2,\ldots , k_d]$.\vspace{.1in}

In the following construction, we shall describe how $X_{i+1}$ is obtained from $X_i$ by adding tiles around each boundary vertex of $X_i$ (that is, completing a fan), such that each boundary vertex of $X_i$ becomes an interior vertex of $X_{i+1}$.  Informally speaking, the tiles added to construct $X_{i+1}$ form a \textit{layer} around $X_i$; the final tiled surface $X_\infty$ is thus built by successively adding concentric layers. 

We shall now describe the inductive step of the construction,  namely, how to add tiles to expand from $X_i$ to $X_{i+1}$.\vspace{.1in}

To ensure that Property 2 is maintained, we shall repeatedly use the following elementary topological fact:

\begin{lem}\label{toplem} Let  $\Omega_0, \Omega_1$ be two topological spaces, each homeomorphic to a closed disk, 
and let $X = (\Omega_0 \cup \Omega_1)/{\sim}$ be the space obtained by identifying a connected non-trivial arc in $\partial \Omega_0$ with such an arc in $\partial \Omega_1$. Then $X$ is also homeomorphic to a closed disk. 
 \end{lem}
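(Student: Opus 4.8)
The plan is to give a clean topological argument using the standard structure theory of surfaces with boundary, avoiding any appeal to point-set pathologies since both pieces are honest closed disks. First I would observe that $X$ is a compact Hausdorff space (a quotient of the disjoint union of two compact Hausdorff spaces by a closed equivalence relation), and that it is obtained by gluing two surfaces-with-boundary along an arc in each of their boundaries. The key local claim is that $X$ is a topological surface with boundary: away from the glued arc this is clear, and at an interior point of the glued arc one can use the fact that an arc in the boundary of a disk has a half-disk neighborhood (a collar-type statement, or simply invoke that $(\Omega_j, \partial\Omega_j)$ is homeomorphic to $(\mathbb{D}^2, S^1)$ and that a nontrivial subarc of $S^1$ has a neighborhood in $\mathbb{D}^2$ homeomorphic to a half-disk $\{(x,y): x^2+y^2\le 1, y\ge 0\}$ with the subarc sitting in $\{y=0\}$); gluing two half-disks along the segment $\{y=0\}$ yields a full disk, hence an interior chart of $X$. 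At the two endpoints of the glued arc, a similar but slightly more careful local picture shows $X$ has a boundary chart (two quarter-disk-like wedges glued along one edge give a half-disk).

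Next I would pin down the global homeomorphism type. Having established that $X$ is a compact surface with nonempty boundary, I invoke the classification of compact surfaces with boundary: such a surface is determined up to homeomorphism by its orientability, its Euler characteristic, and its number of boundary components. So it suffices to check three things. (i) $X$ is connected: immediate, since $\Omega_0$ and $\Omega_1$ are each connected and the glued arc is nonempty. (ii) $X$ is simply connected, or equivalently has Euler characteristic $1$: the gluing arc $A$ is contractible, so by van Kampen (or by the Mayer--Vietoris / inclusion-exclusion formula for Euler characteristics, $\chi(X) = \chi(\Omega_0) + \chi(\Omega_1) - \chi(A) = 1 + 1 - 1 = 1$) the space $X$ has trivial fundamental group; alternatively each $\Omega_j$ deformation retracts onto $A$ rel $A$, so $X$ deformation retracts onto $A$, which is contractible. (iii) $X$ has exactly one boundary component: the boundary of $X$ is $(\partial\Omega_0 \setminus \mathrm{int}\,A) \cup (\partial\Omega_1 \setminus \mathrm{int}\,A)$ with the two copies of the endpoints of $A$ identified; each $\partial\Omega_j$ is a circle and removing the open arc $\mathrm{int}\,A$ leaves a closed arc, so $\partial X$ is two closed arcs glued at both pairs of endpoints, which is a single circle. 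A compact, connected, simply connected surface with exactly one boundary circle is the closed disk $\mathbb{D}^2$.

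I expect the only genuinely delicate step to be verifying that $X$ is locally Euclidean (with boundary) at the \emph{endpoints} of the identification arc $A$ — this is where one must be careful that the arc is ``nontrivial'' (not a single point, and not all of $\partial\Omega_j$), since otherwise the local model degenerates. Everything after that — connectedness, the Euler characteristic count, and counting boundary circles — is routine, and the classification theorem does the rest. An alternative, more hands-on route that sidesteps the classification theorem entirely: parametrize $\partial\Omega_0 \cong S^1$ and $\partial\Omega_1\cong S^1$ so that $A$ becomes a fixed closed arc (say the upper semicircle) in each, build the gluing homeomorphism on the boundary circles explicitly, extend it radially to a homeomorphism of the two disks realizing $X$ as the double of a half-disk along a diameter — which is visibly $\mathbb{D}^2$. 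I would present the first argument as the main proof since it is shorter to write, and perhaps remark on the second.
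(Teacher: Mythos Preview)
Your proposal is correct, and in fact considerably more detailed than what the paper offers: the paper states Lemma~\ref{toplem} as an ``elementary topological fact'' and gives no proof whatsoever. Both of your approaches --- the classification-of-surfaces argument via Euler characteristic and boundary-component count, and the hands-on radial-extension construction --- are valid and standard ways to justify this fact; either would serve perfectly well as a proof the paper omits.
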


In the construction, we shall assume that Properties 1 and 2  hold for $X_i$. 
As we saw, these were true for $i=0$, namely for the tiled surface $X_0$. We shall verify it for $X_{i+1}$ when we complete the construction.\vspace{.1in}

As a consequence of Property 2, the boundary $\partial X_i$ is a topological circle. 
Let the boundary vertices be $v_0, v_1, \ldots , v_n$ in a counter-clockwise order.  Note that the number $(n+1)$ of vertices  is certainly dependent on $i$, and in fact grows exponentially with $i$, but we shall suppress this dependence for the ease of notation.\vspace{.1in}

Moreover, we shall choose this cyclic ordering such that $v_0$ is a vertex of valence $3$, and $v_n$ is a vertex of valence $2$.  (This is possible because Property 1 holds for $X_i$.) 


\subsection*{Completing a fan at $v_0$} 

We begin by adding tiles to complete the fan $F_0$ at  $v_0$.  

\begin{figure}[h]
  \centering
  \includegraphics[scale=.5]{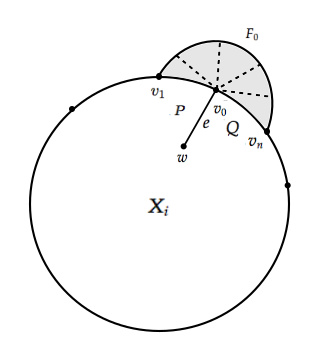}\vspace{.1in}
  \caption{The fan $F_0$ is completed at the boundary-vertex $v_0$ of $X_i$. The added wedge is shown shaded. }
\end{figure}




Recall that $v_0$ has valence $3$ in $X_i$. Hence, $v_0$ is the common vertex of two polygons $P$ and $Q$ in $X_i$. 
The topological operation of adding the fan can be viewed as follows: consider a semi-circular arc centered at $v_0$ in the exterior of $X_i$ and with endpoints at $v_{1}$ and $v_{n}$. We add $d-3$ ``spokes" to the resulting ``wedge" containing $v_0$: this results in a fan around $v_0$ comprising the initial polygons $P$ and $Q$, and exactly $d-2$ triangles. 
 Finally, we add more valence $2$ vertices to the sub-arcs of the boundary of the wedge in order to obtain $d$ polygons with the desired sizes and cyclic order prescribed by the vertex-type. 
 
More explicitly: let $e$ be the common edge between $P$ and $Q$, one of whose endpoints is $v_0$, and the other, say, $w$. (See Figure 4.) Note that if the sizes of $P$ and $Q$ are  $x$ and $y$ respectively, then $xy$ appears (in clockwise order) in the  vertex-type for the vertex $w$. Then the vertex-type $\mathsf{k}$ can be expressed as  $[x,y, k_3,k_4,\ldots ,k_d]$, by recording the size of each polygon around $w$ when traversed in clockwise order.  Thus the cyclic tuple  $\mathsf{k}$ can also be expressed as $[y,x,k_d,k_{d-1}, \ldots, k_3]$.  Thus we can subdivide the wedge to obtain polygons  of sides  $k_d, k_{d-1}, \ldots ,k_3$ this time placed in counter-clockwise order around $w$, to complete the fan around $v_0$.

\subsection*{Completing fans at $v_1,v_2,\ldots ,v_{n-1}$}

We then successively complete fans  $F_j$ at $v_j$ for $1\leq j\leq n-1$ as follows. Assume we have completed fans at $v_0,v_1,\ldots ,v_{j-1}$.  At the $j$-th stage, there are two cases:\vspace{.1in}

\textit{Case I. The vertex $v_j$ has valence $3$ in $X_i \cup F_0\cup F_1 \cup \cdots \cup F_{j-1}$.}  This implies that $v_j$ had valence $2$ in $X_i$; the additional edge incident to $v_j$ comes from the fan $F_{j-1}$  added at $v_{j-1}$. Let $P$ be the polygon in $X_i$ that has $v_j$ as a vertex, and let $Q$ be the polygon in the fan  $F_{j-1}$ that has $v_j$ as a vertex.  Note that the edge between $v_{j-1}$ and $v_j$ is the common edge of $P$ and $Q$.  To describe the fan $F_j$ topologically, draw an arc in the exterior of $X_i \cup F_0 \cup F_1 \cup \cdots \cup F_{j-1}$, between $v_{j+1}$ and the vertex in $\partial F_{j-1}$ adjacent to $v_j$. (See Figure 5.)  Divide this wedge region into $d-2$ triangles by adding spokes, and as before, add an appropriate number of vertices to the resulting circular arcs to have polygons with more than three sides. Note that if the sizes of $P$ and $Q$ are $x$ and $y$ respectively, then $yx$ appears in the vertex-type of $v_{j-1}$, and as before, there is a choice of  such polygons that completes the fan $F_j$.\vspace{.1in}

\begin{figure}[h]
  \centering
  \includegraphics[scale=.4]{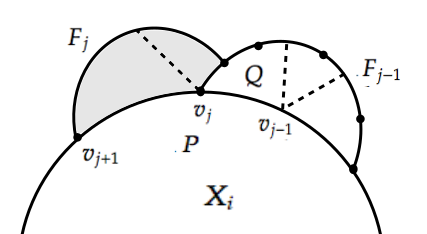}\vspace{.1in}
  \caption{The fan $F_j$ added at  $v_j$, that has valence $3$ after $F_{j-1}$ was added. }
\end{figure}

\textit{Case II. The vertex $v_j$ has valence $4$ in $X_i \cup F_0\cup F_1 \cup \cdots \cup F_{j-1}$.}  Then $v_j$ has valence $3$ in $X_i$. Let $P$ and $Q$ be the polygons in $X_i$ sharing the vertex $v_j$, such that $Q$ shares an edge with $F_{j-1}$, and let $P$ and $Q$ have $x$ and $y$ sides respectively.  (See Figure 6.) Then $xy$  appears in the vertex-type of $v_j$.  Let $R$ be the polygon in the fan $F_{j-1}$ that also has $v_j$ as a vertex, and let $z$ be the number of its sides. Note that by the assumption that the degree $d\geq 4$, the polygons $P,Q,R$ cannot be the only polygons around $v_j$ in the final tiling; our task is to show that we can add more to complete the fan at $v_j$. 

\begin{figure}[h]
  \centering
  \includegraphics[scale=.4]{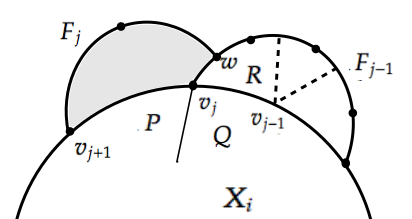}\vspace{.1in}
  \caption{The fan $F_j$ added at  $v_j$ of valence $4$: here we need Condition (A). }
\end{figure}

\noindent\textit{Claim 1. The triple $xyz$ appears in the cyclic tuple $\mathsf{k} = [k_1,k_2,\ldots, k_d]$.}
\vspace{.05in}

\indent The edge between $v_{j-1}$ and $v_j$ is common between $Q$ and $R$, and the vertex-type at $v_{j-1}$ includes $zy$. 
We have already seen above that $xy$ appears in $\mathsf{k}$. 
Hence, by Condition (A),  the triple $xyz$ appears in $\mathsf{k}$. This proves Claim 1. \vspace{.1in}

 Hence, we can choose numbers of sides of successive polygons to follow $P,Q$ and $R$ around $v_j$,  to complete a fan $F_j$. We do this by adding and subdividing a wedge, just as in Case I.

\subsection*{Completing a fan at $v_{n}$} 

Finally, we need to complete the final fan $F_n$ around $v_n$. Note that we had chosen $v_n$ to have valence $2$ in $X_i$; after adding the fans $F_j$ for $0 \leq j \leq n-1$, $v_n$ has valence $4$ in $X_i \cup \left(\bigcup_{j=0}^{n-1} F_j\right)$, where the two additional edges belong to $F_0$ and $F_{n-1}$. Consider the three polygons $Q$, $P$ and $R$ in counter-clockwise order around $v_n$, where $Q$ is a polygon in $F_0$, $P$ is a polygon in $X_i$, and $R$ is a polygon in $F_{n-1}$, each having $v_n$ 
as a vertex.  Let $P,Q,R$ have $x,y,z$ sides respectively.  (See Figure 7.) \vspace{.1in}

\noindent\textit{Claim 2. The triple $yxz$ appears in the cyclic tuple $\mathsf{k} = [k_1,k_2,\ldots, k_d]$.}
\vspace{.05in}

\indent Since $P$ and $Q$ share the edge between $v_n$ and $v_0$, and since they are successive polygons (in counter-clockwise order) in the fan of $v_0$, $xy$ appears in the vertex-type $\mathsf{k}$. Similarly, $R$ and $P$ share the edge between $v_{n-1}$ and $v_n$, and are successive polygons in the fan of $v_{n-1}$, and so $zx$ appears in the vertex-type. Since both $yx$ and $xz$ appear in the vertex-type $\mathsf{k}$, by Condition (A), so does $yxz$. This proves Claim 2.\vspace{.1in}

Hence by adding a wedge based at $v_n$ between the fans $F_0$ and $F_{n-1}$, and subdividing into polygons, there is a choice of numbers of sides such that the successive polygons $Q,P,R$ are completed to a fan $F_{n}$.  Note that once again, we have implicitly used the assumption that $d\geq 4$, as in the process we are ending up with at least four polygons around $v_n$.

\begin{figure}[h]
  \centering
  \includegraphics[scale=.4]{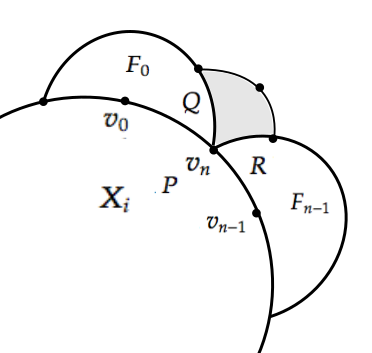}\vspace{.1in}
  \caption{The final fan $F_n$ involves adding a wedge between $F_0$ and $F_{n-1}$. }
\end{figure}

\subsection*{Verifying Properties 1 and 2}

We can now define 
\begin{equation*}
X_{i+1} := X_i \cup F_0 \cup F_1 \cup \cdots \cup F_n.
\end{equation*}
By construction, all the boundary vertices $v_0,v_1,\ldots ,v_n$ are in the interior of $X_{i+1}$, and every interior vertex has vertex-type $\mathsf{k}$. 

By the inductive hypothesis, $X_i$ is topologically a closed disk, and by construction, at each step of adding a fan at a boundary vertex, one is adjoining a simply-connected wedge to a connected arc of the boundary. Hence by Lemma \ref{toplem}, the union $X_i \cup F_0 \cup F_1 \cup \cdots \cup F_j$ is topologically a closed disk for each $j=0,1,2,\ldots ,n$. By the remark following Lemma \ref{leml}, we can realize each (topological) polygon as regular hyperbolic polygon of side length $l_0$, such that at each vertex the total angle is $2\pi$, and we have a smooth hyperbolic metric in the interior of $X_{i+1}$. This establishes Property 2 for $X_{i+1}$.   \vspace{.1in}

To check Property 1 for $X_{i+1}$, notice that the new boundary vertices are the vertices of the fans $F_0, F_1,\ldots ,F_n$ that lie on the boundary arcs of the wedges that we added.  Vertices that lie in the \textit{interior} of such a boundary arc have valence $2$ or $3$, exactly as for boundary vertices of a fan $X_0$.  There must be one such vertex of valence $2$, since there  is no triangular tile. 

Now a vertex $w$ that lies at the intersection of two adjacent wedges, say $F_j$ and $F_{j-1}$,  is the endpoint of the edge from $v_{j}$ to $w$ that is common to $F_j$ and $F_{j-1}$.  (See Figure 6.) Note that there cannot be an edge from $w$ to $v_{j+1}$, since then $v_{j+1}wv_j$ will form a triangular tile, contradicting our assumption that our tiling is triangle-free. Similarly, there cannot be an edge from $w$ to $v_{j-1}$. Hence this boundary vertex $w$ is of valence $3$.

Thus, all boundary vertices of $X_{i+1}$ have valence $2$ or $3$, and there is at least one vertex of valence $2$ and $3$, verifying Property 1.

\subsection{The endgame}
It only remains to show:

\begin{lem}\label{exh} The union $X_\infty$ of the tiled surfaces $X_i$ for $i\geq 0$ is isometric to the entire hyperbolic plane.
\end{lem}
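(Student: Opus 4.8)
The plan is to show that $X_\infty = \bigcup_{i \geq 0} X_i$, equipped with the glued-up hyperbolic metric, is a complete, simply connected hyperbolic surface without boundary; by the uniformization / Killing--Hopf theorem this forces $X_\infty$ to be isometric to $\mathbb{H}^2$. There are three things to check: (i) $X_\infty$ is a surface without boundary carrying a smooth metric of curvature $-1$; (ii) $X_\infty$ is simply connected; (iii) $X_\infty$ is metrically complete. Point (i) is essentially already in hand: every point of $X_\infty$ lies in the interior of some $X_i$ by construction (each boundary vertex of $X_i$, and indeed each boundary point, is interior to $X_{i+1}$, since we glue a full layer of tiles all the way around), and the interior of each $X_i$ already has a smooth hyperbolic metric with total angle $2\pi$ at every vertex; these metrics agree on overlaps because we always use regular polygons of the fixed side-length $l_0$. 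So $X_\infty$ is a hyperbolic surface with empty boundary and with vertex-type $\mathsf{k}$ at every vertex.

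For (ii), simple connectivity: $X_\infty$ is an increasing union of closed disks $X_0 \subset X_1 \subset \cdots$, each of which is contractible by the repeated application of Lemma \ref{toplem} already carried out. Any loop in $X_\infty$ is compact, hence contained in some $X_i$ (the $X_i$ form an open cover of $X_\infty$ by their interiors after passing one step up, or one argues directly with compactness), and it bounds a disk there; likewise $X_\infty$ is connected as an increasing union of connected sets. Hence $\pi_1(X_\infty)$ is trivial.

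For (iii), completeness, which I expect to be the main obstacle: the danger is that the successive layers shrink so fast that $X_\infty$ is only a bounded region of $\mathbb{H}^2$, or more subtly, that some Cauchy sequence escapes to a ``missing'' boundary. The key quantitative input is that all tiles are regular hyperbolic polygons of a \emph{fixed} side-length $l_0 > 0$ depending only on $\mathsf{k}$; hence each tile contains a metric ball of some fixed radius $r_0 = r_0(\mathsf{k}) > 0$ about its center. The idea is to show that $X_i$ contains the metric $r$-neighborhood (in $X_\infty$, equivalently in $\mathbb{H}^2$) of the initial vertex $V$ for $r$ growing linearly (or at least without bound) in $i$: since passing from $X_i$ to $X_{i+1}$ adjoins a full concentric layer of tiles around the boundary, and each such tile has inradius bounded below, any point within distance $\sim i\cdot c(\mathsf{k})$ of $V$ lies in $X_i$. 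Concretely, if $p \in X_\infty$ is at distance $\rho$ from $V$, a minimizing geodesic from $V$ to $p$ crosses at most finitely many tiles, and crossing each tile costs at least some definite amount of distance --- here one uses that a geodesic segment entering and leaving a compact regular polygon of side $l_0$ has length bounded below by a constant, OR one simply notes that the $j$-th layer lies outside the $(j\cdot \delta)$-ball for a uniform $\delta > 0$, using that consecutive layers are separated. Either way one concludes every bounded subset of $X_\infty$ lies in some $X_i$, so $X_\infty$ is a union of compact sets that is ``proper'', and in particular every Cauchy sequence is eventually inside a fixed compact $X_i$ and hence converges. Thus $X_\infty$ is complete. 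A clean alternative for completeness: $X_\infty$ is a hyperbolic surface that is an increasing union of compact subsurfaces whose \emph{interiors} exhaust it and such that the metric ball of every radius about $V$ is contained in some $X_i$; a geodesic ray from $V$ therefore stays defined for all time, so $X_\infty$ is geodesically complete.

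Finally, combining (i)--(iii): $X_\infty$ is a complete, simply connected Riemannian surface of constant curvature $-1$, hence isometric to $\mathbb{H}^2$, and the tiling it carries is a semi-regular tiling of the hyperbolic plane with vertex-type $\mathsf{k}$, as required. The only genuinely delicate point is the uniform lower bound ensuring the layers do not accumulate, and this is exactly where the fixed side-length $l_0$ from Lemma \ref{leml} is used; everything else is bookkeeping with Lemma \ref{toplem} and standard facts about surfaces of constant curvature.
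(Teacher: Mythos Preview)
Your proposal is correct and follows essentially the same route as the paper: show $X_\infty$ is a complete, simply connected surface of constant curvature $-1$ and invoke the classification (the paper cites Cartan--Hadamard, you cite Killing--Hopf/uniformization; same content here). One caveat: your first stab at the quantitative step --- that a geodesic segment entering and leaving a regular polygon of side $l_0$ has length bounded below --- is false as stated, since the segment can clip a corner and be arbitrarily short; but your alternative, that consecutive layers are separated by a uniform $\delta>0$ because the finitely many tile shapes have a common size, is exactly the paper's Claim~3 and is what makes the argument go through.
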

\begin{proof}
Clearly, the initial fan $X_0$ contains a hyperbolic  disk of some positive radius, say  $r_0>0$, that is centered at the central vertex $V$.\vspace{.1in}

\noindent\textit{Claim 3. There is an $r>0$ such that any point on the boundary of  $X_{i+1}$ is at distance at least $r$ from $X_i$, for each $i\geq 0$.}

\vspace{.05in}

Any regular polygon in the annular region $X_{i+1}  \setminus X_i$  has either a  vertex or a side lying on the boundary of $X_i$. 
For each such polygon $P$, we choose $\delta_P >0$ as follows, in the two possible cases:
\begin{itemize}
\item[(i)]  $P \cap X_i = \{v\}$ where $v$ is a vertex of $P$ that is also in $\partial X_i$. Let $s_+$ and $s_-$ be the edges of $P$ that have $v$ as a common vertex. Let $N_{\delta}(v)$ be the set of points of $P$ at distance less than $\delta$ from $v$. We choose (sufficiently small) $\delta_P$ such that  $N_{\delta_P}(v)$ is at distance at least $\delta_P$ from the remaining sides of $P$ (that is, except $s_\pm$). 
\item[(ii)]   $P \cap X_i = \{s\}$, where $s$ be a side of $P$ lying in $\partial X_i$. Let $s_+$ and $s_-$ be the two sides adjacent to $s$ in $\partial P$.  Let $N_{\delta}(s)$ be the set of points of $P$ at distance less than $\delta$ from $s$.  We then choose (sufficiently small) $\delta_P$ such that $N_{\delta_P}(s)$ is at distance at least $\delta_P$ from the sides of $P$ other than $s,s_-,s_+$.
\end{itemize}
Since $P$ is a regular polygon (of side length $l_0$ -- see the remark following Lemma \ref{leml}),  the quantity $\delta_P$ only depends on the number of sides (the size) of $P$. However, the size of any such regular polygon is one of the integers in $\mathsf{k}$, and hence, by taking a minimum, we get a number $r>0$ that works for all the polygons $P$.

Now, for each polygon $P$ in  $X_{i+1}  \setminus X_i$, we consider the set $N_r(v)$ if $P \cap \partial X_i = \{v\}$ or $N_r(s)$ if $P\cap \partial X_i = \{s\}$. The union of these neighborhoods forms an annular region comprising points at distance less than $r$ from $\partial X_i$, that is disjoint from $\partial X_{i+1}$.  This proves Claim 3.\vspace{.1in}

Thus,  for each $i\geq 0$, the distance of the boundary of $X_{i}$ from $V$ is at least $r_0 + i\cdot r$, and hence the region $X_i$ includes a hyperbolic disk of radius $r_0 + i\cdot r$ centered at $V$. As $i\to \infty$, the radius tends to infinity, and hence the simply-connected surface $X_\infty$  we obtain in the union of the tiled surfaces is complete. By construction, this simply-connected surface has a smooth metric of constant negative curvature, and hence by the Cartan-Hadamard theorem, it is the entire hyperbolic plane. This proves the lemma. 
\end{proof}

This completes the proof of Theorem \ref{thm1}.\vspace{.1in}

 For the uniqueness statement, we need additional conditions on $\mathsf{k}$ to ensure that there is no choice in any step of the preceding construction: see Theorem \ref{thm-uniq} and its proof in \S4. \vspace{.05in}
 
 We conclude this section with the following observation:

\begin{lem}\label{cor-leml}
Let $\mathcal{T}$ be a (topological) tiling of the hyperbolic plane $\mathbb{H}^2$ with the property that the vertex-type at each vertex is $\mathsf{k}$, where $\mathsf{k}$ satisfies the angle-sum condition \eqref{asum}. Then $\mathcal{T}$ can be ``geometrized", that is, there is a semi-regular tiling of $\mathbb{H}^2$ by regular polygons with vertex-type $\mathsf{k}$ at each vertex, that is equivalent to $\mathcal{T}$. Moreover, any two semi-regular tilings equivalent to $\mathcal{T}$ are related by a hyperbolic isometry that takes vertices and edges of one to those of the other. 
\end{lem}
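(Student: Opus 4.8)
The plan is to build the geometrization by the same layer-by-layer procedure used in the proof of Theorem \ref{thm1}, but now letting the \emph{given} combinatorial tiling $\mathcal{T}$ dictate every choice, so that no appeal to Conditions (A) or (B) is needed. Fix a vertex $V$ of $\mathcal{T}$ and the fan of tiles around it; by Lemma \ref{leml} there is a unique side-length $l_0>0$ making the total angle at $V$ equal to $2\pi$, and we realize this fan as a hyperbolic closed disk $X_0$ with regular polygons of side-length $l_0$, combinatorially identified with the corresponding subcomplex of $\mathcal{T}$. Inductively, having realized the subcomplex $Y_i\subset\mathcal{T}$ consisting of all tiles meeting $X_i$ (a combinatorial disk, since $\mathcal{T}$ tiles $\mathbb{H}^2$) as a hyperbolic disk $X_i$, we obtain $X_{i+1}$ by completing, at each boundary vertex of $X_i$ in turn, exactly the fan prescribed by $\mathcal{T}$ — there is no freedom, since $\mathcal{T}$ tells us precisely which tiles and in which cyclic order surround each vertex. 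Each newly completed vertex has total angle exactly $2\pi$ because its vertex-type is $\mathsf{k}$ and $l_0$ was chosen for that purpose; Lemma \ref{toplem} keeps each $X_{i+1}$ a closed disk. The union $X_\infty=\bigcup_i X_i$ carries a smooth metric of curvature $-1$, and the verbatim argument of Lemma \ref{exh} (the quantities $\delta_P$ depend only on the sizes appearing in $\mathsf{k}$, hence are bounded below) shows $X_\infty$ exhausts all of $\mathbb{H}^2$, so $X_\infty$ is a semi-regular tiling combinatorially isomorphic to $\mathcal{T}$.

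For the uniqueness-up-to-isometry statement, suppose $T$ and $T'$ are two semi-regular tilings each equivalent to $\mathcal{T}$; fix a combinatorial isomorphism $\phi\colon T\to T'$. Pick a flag (a tile $F$, an edge $e$ of $F$, a vertex $v$ of $e$) in $T$ and its image flag $(\phi(F),\phi(e),\phi(v))$ in $T'$. Both $F$ and $\phi(F)$ are regular hyperbolic polygons with the same number of sides, hence congruent, so there is a unique orientation-preserving isometry $g$ of $\mathbb{H}^2$ carrying the flag of $F$ to the flag of $\phi(F)$ (an orientation-reversing one if $\phi$ reverses orientation; either way $g$ is determined by the flags). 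I then propagate: $g$ carries each neighbor tile of $F$ across a shared edge to a regular polygon with the correct number of sides attached along the correct edge of $\phi(F)$, and by the side-length normalization (both tilings use polygons forced to have total angle $2\pi$, hence the same $l_0$) this coincides with the corresponding tile of $T'$ under $\phi$. Spreading this matching outward tile-by-tile — the combinatorial disk structure of the tilings, together with connectedness of $\mathbb{H}^2$, guarantees consistency around every vertex because both tilings have the same vertex-type and the angles close up to $2\pi$ — shows $g$ maps $T$ onto $T'$ realizing $\phi$, and in particular sends vertices and edges of $T$ to those of $T'$.

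The step I expect to be the main obstacle is the consistency of the propagation in the uniqueness argument: one must check that transporting $g$ around a closed loop of tiles in $T$ returns the identity, i.e.\ that the holonomy of this developing-type construction is trivial. This is where simple-connectedness of $\mathbb{H}^2$ and the fact that \emph{every} vertex has total angle exactly $2\pi$ (so there is no angle defect to accumulate) do the work, but it needs to be stated carefully — essentially one argues that the set of tiles on which $g$ agrees with $\phi$ (after the flag normalization) is nonempty, open, and closed in the tile-adjacency graph, which is connected. A clean way to package all of this is to invoke the standard fact that a local isometry between simply-connected complete hyperbolic surfaces is a global isometry: the map built from $g$ on $X_0$ extends, by the matching above, to a local isometry $T\to T'$ of $\mathbb{H}^2$, which is therefore a global isometry. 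The geometrization half has no real obstacle beyond bookkeeping, since it is the construction of Theorem \ref{thm1} with all choices pinned down by $\mathcal{T}$.
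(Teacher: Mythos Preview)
Your proposal is correct and rests on the same two ingredients the paper uses --- the unique side-length $l_0$ from Lemma \ref{leml}, and the completeness argument of Lemma \ref{exh} --- but the execution differs in both halves.

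For the geometrization, the paper does not run the layer-by-layer construction at all: it simply realizes every tile of $\mathcal{T}$ as a regular hyperbolic polygon of side $l_0$ and glues them all at once according to the combinatorics of $\mathcal{T}$, obtaining a simply-connected hyperbolic surface; the argument of Lemma \ref{exh} then shows this surface is complete, and Cartan--Hadamard identifies it with $\mathbb{H}^2$. This sidesteps the bookkeeping you take on (that each $Y_i$ is a combinatorial disk, that fans can be completed in order without Property~1/$1'$, etc.). Your route certainly works, but the all-at-once gluing is shorter and makes clearer that nothing from the proofs of Theorems \ref{thm1}--\ref{thm2} is actually needed.

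For uniqueness, the paper again argues more directly: it takes the given combinatorial homeomorphism $h\colon T\to T'$, notes that by the uniqueness of $l_0$ in Lemma \ref{leml} both tilings have the \emph{same} edge-length, so $h$ is already length-preserving on edges and can be extended to an isometry on each tile, hence is a global isometry of $\mathbb{H}^2$. Your flag-matching and tile-by-tile propagation is a more explicit unpacking of exactly this step, and your holonomy concern is precisely what the paper is suppressing in the phrase ``this can be extended to be an isometry on each tile''; your resolution (local isometry between complete simply-connected hyperbolic surfaces is a global isometry) is the standard and correct way to make that rigorous. So: same ideas, but the paper is terser and you are more careful about the point that genuinely needs care.
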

\begin{proof}
Realize each polygon of $\mathcal{T}$ as a regular polygon of side-length $l_0$, obtained in Lemma \ref{leml}, and attach them by isometric identifications of the sides  in the pattern dictated by $\mathcal{T}$.  By Lemma \ref{leml}, these polygons fit together at each vertex to form a fan, and so the resulting surface acquires a (smooth) hyperbolic metric. Moreover, by the argument in Lemma \ref{exh}, this surface is in fact complete, and since it tiles a simply-connected region, it is isometric to $\mathbb{H}^2$ by the Cartan-Hadamard theorem.  This results in a semi-regular tiling of $\mathbb{H}^2$ that is equivalent to $\mathcal{T}$. 

Let  ${T}$ and ${T}^\prime$ be two such semi-regular tilings equivalent to $\mathcal{T}$. Then since $T$ and $T^\prime$ are equivalent, there is a homeomorphism $h$  of the hyperbolic plane to itself,  that maps vertices, edges  and tiles of $T$ to those of $T^\prime$. Again by Lemma \ref{leml}, the choice of  hyperbolic length $l_0$ of the edges for a semi-regular tiling with vertex-type $\mathsf{k}$ is unique.  Thus, $h$ must  be length-preserving on each edge, and this can be extended to be an isometry on each tile.  Thus, we in fact have an isometry of the hyperbolic plane to itself, that realizes the equivalence between $T$ and $T^\prime$.
\end{proof}

 \section{Handling triangular tiles: Proof of Theorem \ref{thm2}}
 
Suppose we now have a cyclic tuple $\mathsf{k} = [k_1,k_2,\ldots , k_d]$ that satisfies the hypotheses of Theorem \ref{thm2}. This time,  $d\geq 6$, but we could have $k_i =3$ for some (or all) $i \in \{1,2,\ldots, d\}$.

\subsection*{Constructing the exhaustion} The construction  is the same inductive  procedure as in \S2: we start with a fan $X_0$ around a single vertex $V$, and proceed to build a sequence of tiled surfaces
 \begin{equation*}
 X_0 \subset X_1 \subset X_2 \subset \cdots \subset X_i \subset X_{i+1}  \subset \cdots 
 \end{equation*}
 which builds up a tiled surface $X_\infty$ that is isometric to the hyperbolic plane, and such that each interior vertex of $X_i$ has vertex-type $\mathsf{k}$, for each $i\geq 0$.
  
 In what follows we shall point out some of the differences with the proof of Theorem \ref{thm1} in \S2.  \vspace{.1in}
  
  The key difference is that this time a vertex of valence $4$ may appear on the boundary of a tiled surface  $X_{i+1}$ after completing the fans for the boundary vertices of $X_i$. (See Figure 8.) \vspace{.1in}

  Each region $X_i$ shall satisfy Property 2 as before, but the following different analogue of Property 1:\vspace{.1in}
  
 \noindent \textbf{Property $1^\prime$.}\textit{ 
  The following properties hold for $X_i$:}
  \begin{itemize}
  \item[(i)] \textit{ All boundary vertices in $\partial X_i$ have valence $2,3$ or $4$ in $X_i$.} \textit{ Moreover, there is at least one vertex of valence  at least $3$.}
   \item[(ii)]  \textit{Any  boundary vertex  $v \in \partial X_i$ of valence $4$ is the vertex of a triangular tile in $X_i$ that intersects the boundary $\partial X_i$ only at $v$.}
  \item[(iii)] \textit{Either there is a boundary vertex of valence $2$, or there is a boundary edge that belongs to a triangular tile.}
 
  \end{itemize}

  It is easy to see that $X_0$ satisfies (i) and (iii) above. Also,  (ii) is vacuously true as $\partial X_0$ does not have any vertex of valence $4$.  As mentioned,  valence $4$ vertices  may arise on the boundary of $X_i$ for $i\geq 1$ because of the presence of triangular tiles.\vspace{.1in}

As before,  the boundary $\partial X_i$ is a topological circle because of Property 2, and we denote the boundary vertices of $\partial X_i$  by $v_0, v_1,\ldots ,v_n$ in counter-clockwise order.  We also require that:

\begin{itemize}
\item[(a)]  $v_0$ has valence $3$ or valence $4$. 
\item[(b)]  One of the two hold:
\begin{itemize}
\item Either $v_0v_n$ is a boundary edge that belongs to a triangular tile, and $v_n$ has valence $3$, or
\item $v_n$ has valence $2$ in $X_i$.
\end{itemize}
\end{itemize}

This is possible for $X_0$ since if $\mathsf{k} = [3^d]$, then each boundary vertex has valence $3$, and the first condition of (b) holds. Otherwise, as in \S2, we can in fact  choose $v_0$ to have valence $3$ and $v_n$ to have valence $2$, that is, satisfying the second condition of (b). For $X_i$, where $i\geq 1$, we shall verify that such a choice of $v_0$ and $v_n$ is possible at the end of the inductive step.\vspace{.1in}


In what follows we shall complete fans  $F_j$  around $v_j$ for each $0\leq j\leq n$ as before, and define
\begin{equation*}
X_{i+1} := X_i \cup F_0 \cup F_1 \cup \cdots \cup F_n
\end{equation*}
for each $i\geq 0$.

\subsection*{Completing the fan at $v_0$}

When the valence of $v_0$ is $3$, we complete the fan $F_0$ around $v_0$ exactly as in the construction for Theorem \ref{thm1}.  When the valence of $v_0$  equals $4$, then the three polygons of $X_i$ around $v_0$ have sizes $x,3$ and $y$ in counter-clockwise order because of part (ii) of Property $1^\prime$. 

We need to ensure that we can continue placing polygons around $v_0$ to complete a fan, that is, we need to prove:\vspace{.1in}

\noindent\textit{Claim 4. The triple $x3y$ appears in the cyclic tuple $\mathsf{k} = [k_1,k_2,\ldots , k_d]$.}

\vspace{.05in}
Let $v_0w$ and $v_0w^\prime$ be the two edges at $v_0$ whose other endpoints are in the interior of $X_i$. Then Property $1^\prime$ (ii) implies that $v_0ww^\prime$ is a triangular tile. Suppose $P$ and $Q$ are the other polygons  in $X_i$ with $v_0$ as a vertex, having $x$ and $y$ sides respectively. (See Figure 8 for a similar situation where $v_j$ is the vertex, instead of $v_0$.) Then considering the vertices $w$ and $w^\prime$ that lie in the interior of $X_i$ (and consequently have vertex-type $\mathsf{k}$) we see that  the pairs $3x$ and $y3$ must appear in the cyclic tuple $\mathsf{k}$ (in counter-clockwise order). Hence by Condition (A), the triple $x3y$ appears in $\mathsf{k}$. This proves Claim 4.\vspace{.1in}

Then, as before, we can add a wedge at $v_0$ in the exterior of $X_i$, and subdivide into polygons by adding spokes and vertices on the resulting boundary arcs of the wedge, having the numbers of sides that determine the rest of the tuple $\mathsf{k}$ following $x, 3$ and $y$.  This completes the fan $F_0$ at $v_0$.

\subsection*{Completing the fan at $v_j$}
Now suppose we have completed fans around $v_0, v_1,\ldots, v_{j-1}$ for $1\leq j\leq n-1$, and we need to  complete the fan $F_j$ at $v_j$.

As before, our analysis divides into cases depending on the valency of the vertex $v_j$ in $X_i$.  When $v_j$ has valence $3$ or $4$ in the already-tiled surface $X_i \cup F_0 \cup F_1\cup \cdots \cup F_{j-1}$ , the completion of the fan $F_j$ proceeds exactly as in the corresponding step in the proof of Theorem \ref{thm1}. 
The new case is when $v_j$ has valence $5$, that is, when it had valence $4$ in $X_i$ (before the other fans were completed).
In this case, there are three polygons $P,T, $ and $Q$ in $X_i$ which share a vertex $v_j$, where $T$ is a triangular tile, and there is another polygon $R$ in the fan $F_{j-1}$ that has $v_j$ as a vertex.  (See Figure 8.)

\begin{figure}[h]
  \centering
  \includegraphics[scale=.4]{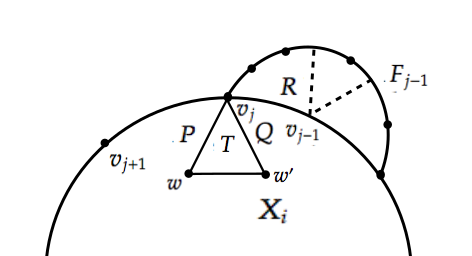}\vspace{.1in}
  \caption{Completing a fan $F_j$ at a vertex $v_j$ of valence $5$ . }
\end{figure}

Thus, around $v_j$, there are polygons $P,T,Q,R$, in that counter-clockwise order. If the corresponding numbers of sides are $x, 3, y$ and $z$, in order to be able to complete a fan at $v_j$ with vertex-type $\mathsf{k}$, we need to show:\vspace{.1in}

\noindent\textit{Claim 5. The quadruple $x3yz$ appears in $\mathsf{k}$.}

\vspace{.05in}
This is where we shall use Condition (B).  Let $v_j$ be the vertex of $T$ as above, and let the other two vertices of $T$ be $w$ and $w^\prime$. Note that $w,w^\prime$ both have vertex-type $\mathsf{k}$ by the inductive hypothesis, since they lie in the interior of $X_i$. Then, since the polygons $T$ and $P$ appear (in counter-clockwise order) around $w$, the pair $3x$ appears in $\mathsf{k}$. Similarly, considering the polygons around $w^\prime$, we see that the pair $y3$ appears in $\mathsf{k}$. Using Condition (A), we deduce, exactly as in a previous claim, that the triple $x3y$ appears in $\mathsf{k}$. Now the vertex $v_{j-1}$ has the polygons $R$ and $Q$ (in counter-clockwise order) around it, so the pair $zy$ also appears in $\mathsf{k}$. Applying the same argument involving Condition (A), we conclude that $3yz$ appears in $\mathsf{k}$. Finally, since the triples $x3y$ and $3yz$ are in $\mathsf{k}$, an application of Condition (B) proves the claim. \vspace{.1in}

This allows a wedge to be added at $v_j$, and divided into polygons, so that the polygons $P,T,Q$ and $R$ are part of a fan $F_j$ of vertex-type $\mathsf{k}$ that is thus completed around $v_j$.

\subsection*{The final fan}

To complete the fan $F_n$ at the remaining boundary-vertex $v_n$, we would need to add a wedge that goes between the fans $F_0$ and $F_{n-1}$, and subdivide into polygons. 

The case when $v_n$ had valence $2$ in $X_i$ is exactly as in the case of completing the final fan in the proof of Theorem \ref{thm1}. 

The remaining case is when $v_0v_n$ is an edge of a triangular tile $T$ and $v_n$ has valence $3$: in this case the polygons in $X_i \cup F_0 \cup F_1 \cup \cdots \cup F_{n-1}$  that  share the vertex  $v_n$ are $P$ (which is part of $F_0$), $T$ and $Q$ (which are part of $X_i$),  and $R$ (which is part of $F_{n-1}$), where $P,T,Q$ and $R$ are in counter-clockwise order around $v_n$.   (See Figure 9.) Suppose the numbers of vertices of $P,Q$ and $R$ are $x,y$ and $z$ respectively. 
Then, by exactly the same argument as in Claim 5, we have that $x3yz$ belongs to the vertex-type $\mathsf{k}$, and hence there is indeed a completion of these four polygons to a fan $F_n$ at $v_n$.  \vspace{.1in}

\begin{figure}
  \centering
  \includegraphics[scale=.4]{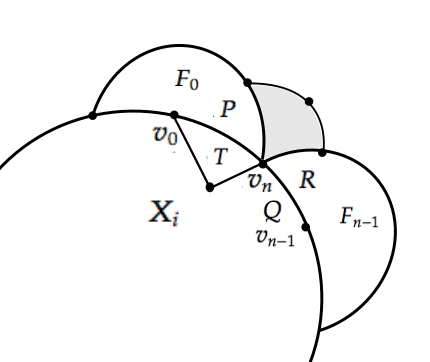}\vspace{.1in}
  \caption{Completing the final fan in the case when $v_0v_n$ is the edge of a triangular tile $T$. }
\end{figure}

This completes the new tiled surface $X_{i+1}$.

\subsection*{Verifying Property $1^\prime$ and Property $2$}

By construction, all interior vertices of $X_{i+1}$ have vertex-type $\mathsf{k}$, and it is easy to see by applying Lemma \ref{toplem} that $X_{i+1}$ is homeomorphic to a closed disk. Moreover, by the remark following Lemma \ref{leml}, we can realize each polygon in $X_{i+1}$ as a regular hyperbolic polygon of side length $l_0$, such that at each vertex the total angle is $2\pi$, and we have a smooth hyperbolic metric in the interior of $X_{i+1}$.  Thus, Property 2 holds for $X_{i+1}$. \vspace{.1in}

Therefore, it only remains to verify Property $1^\prime$, which is where the degree condition  $d\geq 6$ is used.\vspace{.1in}

The key observation is that when the fans $F_j$ (for $0\leq j\leq n$) are added to $X_i$, the following holds:\vspace{.1in}

\noindent\textit{Claim 6. A portion of the wedge added while completing $F_j$ lies on the boundary of $X_{i+1}$. }
\vspace{.05in}

 For example, when a fan $F_j$ is added to a boundary vertex $v_j \in \partial X_i$ having valence $4$ in $X_i$ (see Figure 8), there are already four polygons around $v_j$ in $X_i \cup F_0 \cup F_1 \cup \cdots \cup F_{j-1}$, and the added wedge (to complete the fan $F_j$) needs to have at least one spoke, since the total number of polygons needs to be at least $6$. If $q$ is the endpoint of the first spoke (in counter-clockwise order around $v_j$), then  $F_{j+1} \cap F_j$  cannot contain the portion of the wedge boundary that lies between $q$ and $F_{j-1}$. Hence this portion of the boundary of $F_j$ is on the boundary of $X_{i+1}$.

The same holds for the other cases (when $v_j$ has valencies $2$ or $3$); note that then the added wedge needs to be divided with even more spokes, to have a final valence at least $6$.  This proves Claim 6.\vspace{.1in}

Recall now that Property $1^\prime$ had three parts.\vspace{.1in}

\textit{Proof of (i) and (ii).} A valence $4$ vertex is created in the boundary of $X_{i+1}$ when the fan $F_j$ (for $0\leq j\leq n-1$) has a triangular tile  $T$ in the subdivided wedge, one of whose edges is $v_jv_{j+1}$. In that case, if $q$ is the other vertex of $T$, then $q$ lies in the boundary of $X_{i+1}$, and is disjoint from $F_{j-1}$, by Claim 6 above. Moreover, it has valence $4$ in $X_{i+1}$, since the edge $qv_{j+1}$ will be shared by a polygon in the wedge added at $v_{j+1}$ to complete the next fan $F_{j+1}$.  (See Figure 10.) 
The only other case when a valence $4$ vertex appears in the boundary of $X_{i+1}$ is when the fan $F_0$ has a triangular tile in the added wedge that has side $v_0v_n$. Then, the other vertex $q$ of $T$ lies in the boundary of $X_{i+1}$, and has valence $4$ in $X_{i+1}$ when the wedge (for $F_n$) is added at $v_n$.
In both these cases, the triangular tile $T$ lies in the interior of $X_{i+1}$, and (ii) is satisfied.

In all other cases, when completing a fan, the extreme points of the boundary of any added wedge has valence $3$. Recall that we have proved that Property 2 holds for $X_{i+1}$, and hence its boundary is topologically a circle. If a portion $\gamma$  of the boundary of an added wedge  lies in the boundary of $X_{i+1}$, then $\gamma$ is also a portion of the boundary of a fan. Hence, it has no other edges from it to other parts of $X_{i+1}$, and hence all vertices that lie in $\gamma$ have valence either $2$ or $3$. Moreover, each endpoint of the arc $\gamma$ has valence at least $3$.
This proves (i). \vspace{.1in}

\begin{figure}
  \centering
  \includegraphics[scale=.35]{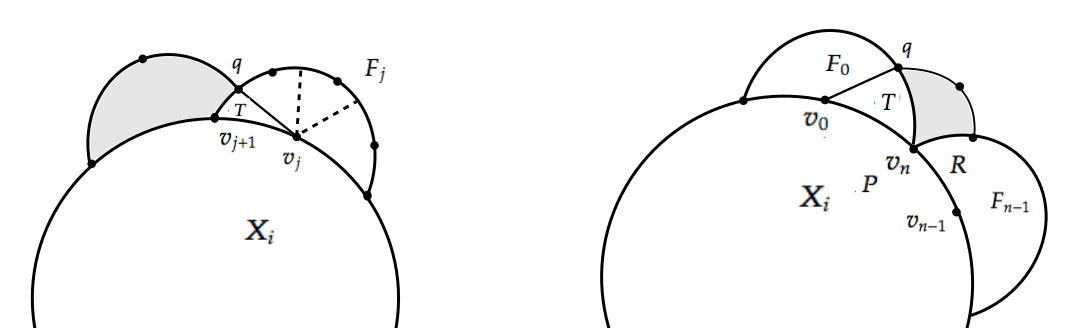}\vspace{.1in}
  \caption{Ways that a valence $4$ vertex $q$ can appear in the boundary of $X_{i+1}$ while completing the fans. Note that a triangular tile $T$ is always involved.}
\end{figure}

\textit{Proof of (iii).} Finally,  recall that the subdivision of the wedge into polygons involves adding spokes, and then, in the case of non-triangular tiles, adding valence $2$ vertices to the  resulting boundary arcs to achieve the desired sizes.
Claim 6 above implies that there is a portion $\gamma$ of an added wedge that lies in the boundary of $X_{i+1}$.  This boundary arc $\gamma$ is either the boundary of  (one or more) triangular tiles tiling the wedge, or there is some polygon in the added wedge having a size greater than $3$.  In the latter case, the subdivision procedure implies that there is a vertex in $\gamma$, and consequently in the boundary of $X_{i+1}$,  that has valence $2$.  
In the former case, there is a boundary edge of $X_{i+1}$ that belongs to a triangular tile.
This proves (iii). \vspace{.1in}

Thus $X_{i+1}$ satisfies Property $1^\prime$, and this completes the inductive step.

\subsection*{Completing the proof} Finally, we verify that we can choose an ordering of the new  boundary vertices $v_0^\prime, v_1^\prime,\ldots ,v_n^\prime$ of $X_{i+1}$ such that $v_0^\prime$ and $ v_n^\prime$ satisfy (a) and (b) stated after Property $1^\prime$. 
In fact, these successive vertices $v_n^\prime, v_0^\prime$ can be chosen to lie along the boundary of the final wedge added while completing $F_n$. Indeed, an extreme point $q$  (in counter-clockwise order) in the boundary of such a wedge, that also belongs to $F_0$, has valence $3$ or $4$ (see, for example, Figure 10). This satisfies (a). The vertex $v$ in the wedge boundary that precedes $q$ either has valence $2$, in case the edge $qv$ belongs to a polygon in the wedge having more than three sides, or else $qv$ is an edge of a triangular tile in the added wedge. In this case, since $d\geq 6$, there is at least one more tile in the added wedge, which the edge $qv$ is adjacent to; hence $v$ has valence exactly $3$. Thus the vertex $v$ satisfies (b), and the vertices $q$ and $v$ can be taken to the first and last vertices ($v_0^\prime$ and $v_n^\prime$) respectively, in our new counter-clockwise ordering of the boundary vertices of $X_{i+1}$. 

Thus, we get a sequence of nested tiled surfaces (each a closed hyperbolic disk) 
\begin{equation*}
X_0 \subset X_1\subset \cdots \subset X_i \subset X_{i+1} \subset \cdots
\end{equation*}
such that any interior vertex of  $X_i$  has vertex-type $\mathsf{k}$. Lemma \ref{exh} still applies (its proof is independent of the hypotheses of Theorems \ref{thm1} and \ref{thm2}),  and this sequence  of regions exhausts the hyperbolic plane, defining the desired semi-regular tiling.  This proves Theorem \ref{thm2}.

\section{Uniqueness and uniformity: Proofs of Theorem \ref{thm-uniq} and Corollary \ref{cor1}} 


\subsection*{Proof of Theorem \ref{thm-uniq}}

Let $T$ be the semi-regular tiling with vertex-type $\mathsf{k}$ obtained from Theorem \ref{thm1} or \ref{thm2}. Assume that the vertex-type $\mathsf{k}$ satisfies the additional property in the statement of Theorem \ref{thm-uniq}. 

 The uniqueness statement then follows from the observation that (a) the semi-regular tiling $T$ can be thought of as arising from our construction, and (b) if the vertex-type $\mathsf{k}= [k_1,k_2,\ldots, k_d]$ satiisfies the hypothesis of the theorem, then there is a unique way of completing the fan $F_j$ for each $0\leq j\leq n$. This is because at any such step, the partial fan that was already at $v_j$, had (at least) two polygons $P$ and $Q$ already in place around it.  If $x$ and $y$ are the sizes of $P$ and $Q$ respectively, then this determines a pair $xy$ that appears in $\mathsf{k}$. Then by the assumed property of $\mathsf{k}$, there is a unique sequence of polygons that can follow $P$ and $Q$ in the final fan. This determines a unique way of choosing the subdivision of the added wedge that determines these polygons. 
 Hence the $i$-th stage of the construction (expanding from $X_{i-1}$ to $X_{i}$)  is determined uniquely for each $i\geq 1$, and thus the final tiling $X_\infty$ is determined uniquely. 
 
 The uniformity statement in fact follows immediately from the fact that the construction of the sub-tilings $X_0\subset X_1\subset X_2\subset \cdots$ is determined uniquely, by our assumption on $\mathsf{k}$.  In what follows, we give a more geometric argument. 
 
 
Let $V$ be an initial vertex, and $X_i$, $i\geq 0$ be the sequence of surfaces, and $X_{\infty}$ their union, as in the construction in \S2 and \S3.  Since $X_\infty$ is isometric to $\mathbb{H}^2$ (by Lemma \ref{exh}), we can consider each $X_i$ (for $i\geq 0$) as an isometrically embedded subset of $\mathbb{H}^2$. Let $U:=v_0, \dots, v_n$ be the vertices in $\partial X_0$ (ordered in the counter-clockwise direction), where $VU$ is an edge (equivalently, $\deg_{X_0}(U) =3$). Let  $P= V\mbox{-} U\mbox{-}v_1\mbox{-}\cdots \mbox{-}v_{p-2}\mbox{-}V$, $Q= U\mbox{-} V\mbox{-}v_{n-q+3}\mbox{-}\cdots \mbox{-}v_{n}\mbox{-}U$  be the polygons (tiles) containing the edge $VU$, of sizes $p$ and $q$ respectively. Thus, the vertex-type {$\mathsf{k}$}  can be expressed as $ [p, q, q_3, \dots, q_d]$, where note that $q_3, \ldots, q_d$, in that order, are uniquely determined by the pair $pq$.  The sizes of the polygons containing $V$ in the clockwise direction around $V$ are then $p, q, q_3, \dots, q_d$ respectively, and these are also the sizes of the polygons containing $U$ in the counter-clockwise direction. The last set of polygons forms the complete fan around $U$, which is also the 0-th layer $Y_0$ when we perform the construction in \S2 or \S3 starting with the initial vertex $U$.  Like the unique sequence of layers $X_0\subset X_1\subset X_2\subset \cdots $ with initial vertex $V$, there exists a unique sequence of layers $Y_0\subset Y_1\subset Y_2\subset \cdots$ with initial vertex $U$, obtained from our construction.  Once again, we consider each $Y_i$ (for $i\geq 0$) as a subspace of $\mathbb{H}^2$. 

Then the hyperbolic reflection $\rho:\mathbb{H}^2 \to \mathbb{H}^2$ which fixes the edge $UV$ and interchanges $U$ and $V$ is an isomorphism (taking vertices to vertices and edges to edges) between the tiled regions $X_0$ and $Y_0$. Moreover, $\rho(P) = P$, $\rho(Q) = Q$, $\rho(v_1) = v_{p-2}$, $\rho(v_n) = v_{n-q+3}$. Clearly,  the image of the fan at $v_1$ (which is a subset of $X_1$) under $\rho$ would be the fan at $v_{p-2}$ (which is a subset of $Y_1$). Continuing this way, considering the fan at each of the vertices $v_2, v_3,\ldots, v_n$, we see that the image of $X_1$ under $\rho$ is $Y_1$. By the same way, we see that $\rho(X_2)=Y_2$, $\rho(X_3)=Y_3, \ldots$, that is, $\rho(X_i) = Y_i$ for each $i\geq 0$. Thus, $\rho$ is an isomorphism between $X_{\infty}=\cup_{i\geq 0}X_i$ and $Y_{\infty}=\cup_{i\geq 0}Y_i$.   Now, observe that $Y_0 \subset X_1$ and $X_0 \subset Y_1$. Moreover, by the uniqueness of $X_i$'s and $Y_i$'s, it follows that $Y_1 \subset X_2$, $Y_2 \subset X_3, \dots$ and $X_1 \subset Y_2$, $X_2 \subset Y_3, \dots$, that is, $X_{i} \subset Y_{i+1}$ and $Y_i \subset X_{i+1}$ for each $i\geq 0$. 
This implies  $X_{\infty} = Y_{\infty}$ and $\rho$ is an automorphism of the tiling $T=X_{\infty} = Y_{\infty}$  such that  $\rho(V) = U$, $\rho(U)=V$.  

Since $U$ is any neighbour of $V$, this argument shows that any two adjacent vertices of $T$ are in the same $\mbox{Aut}(T)$-orbit. Since the $1$-skeleton of $T$ is connected, it follows that all the vertices form one $\mbox{Aut}(T)$-orbit. Therefore, the action of $\mbox{Aut}(T)$ on the vertex set of $T$ is transitive, that is, the semi-regular tiling $T$ is uniform.  This completes the proof.\hfill$\qed$
 

\subsection*{Proof of Corollary \ref{cor1}} 

By Lemma \ref{cor-leml},  it is enough to show that two semi-regular tilings $T$ and $T^\prime$ with the same vertex-type $\mathsf{k} =[p^q]$, where $\frac{1}{p} + \frac{1}{q} < \frac{1}{2}$, are equivalent.  Note that the above inequality arises from the angle-sum condition \eqref{asum}. 

It is easy to see that the uniqueness criterion in Theorem \ref{thm-uniq} is satisfied by the cyclic tuple $\mathsf{k}=[p^q]$.

If $p =3$, then the inequality arising from the angle-sum condition implies that $q\geq 7$. Hence, in this case, the hypotheses of Theorem \ref{thm2}, and the uniqueness criterion in Theorem \ref{thm-uniq}, are satisfied by $\mathsf{k}=[p^q]$, and we deduce that the two tilings are equivalent.

If both $p,q\geq 4$ the hypotheses of Theorem \ref{thm1} and  the uniqueness criterion in Theorem \ref{thm-uniq} are satisfied by $\mathsf{k}$, and we similarly deduce that the two tilings are equivalent.

Finally, if $q=3$, then $p\geq 7$, and the tilings are the {duals} to semi-regular tilings, each with vertex-type $[3^p]$. (Here, the \textit{dual} of a semi-regular tiling is constructed by taking vertices at the incenters of the original tiles, and connecting any pair of vertices in adjacent tiles by a geodesic edge.) The latter tilings are equivalent, as noted above, and hence so are their duals. \hfill$\qed$


 \section{Examples of non-uniqueness}
 
 In this section we give examples of distinct tilings with the same vertex-type.

 \begin{figure}[h]
  \centering
  \includegraphics[scale=.4]{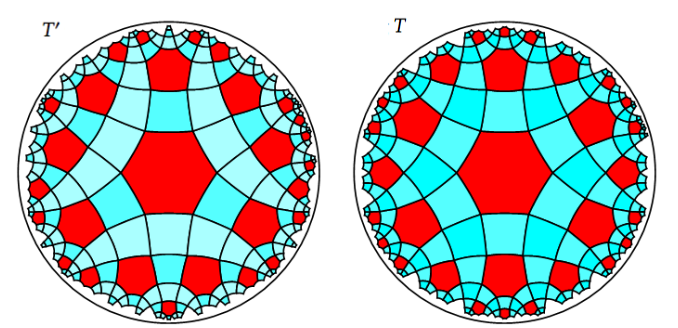}\vspace{.1in}
  \caption{Spot the difference: these are distinct semi-regular tilings with identical vertex-type [4,4,4,6]. }
\end{figure}

\subsection*{Infinitely many distinct tilings}

Consider the cyclic tuple  $\mathsf{k}= [4,4,4,6]$.  Note that such a cyclic tuple does \textit{not} satisfy the criterion for uniqueness: the pair $44$ can be continued both as $[\textbf{4},\textbf{4},4,6]$ as well as $[\textbf{4},\textbf{4},6,4]$. 

In this case, there is a semi-regular tiling $T$ of vertex-type $\mathsf{k}$ such that $T$ is invariant under an action of a discrete subgroup $\Gamma < \text{PSL}_2(\mathbb{R})$, that acts transitively on the hexagonal tiles.  (See the tiling $T$ on the right in Figure 11.) Here, $\Gamma$ is generated by the three  hyperbolic translations, together with their inverses, that take the central red hexagon to the six nearest hexagons lying on the three axes passing through (and orthogonal to) the three pairs of opposite sides of the central hexagon.  

Notice that it has a $\Gamma$-invariant collection $\mathcal{R}$ of bi-infinite rows of squares $\{R_\gamma\vert \gamma \in \Gamma\}$  (see the rows of blue squares in Figure 11). Any such row $R_\gamma$ has adjacent layers $L_+$ and $L_-$ that comprise alternating hexagons and squares.

Then, a tiling $T^\prime$ that is \textit{distinct} from $T$ is obtained by shifting one side of each $R_\gamma$ relative to the other.  For example, performing this shift for three such bi-infinite rows adjacent to alternating sides  (left, right, and bottom) of the central red hexagon in $T$ produces a new tiling $T^\prime$ (on the left in Figure 11). This is clearly not equivalent to $T$ since, for example, $T^\prime$ has a local configuration comprising a pair of hexagons with a chain of three squares between them, which is absent in $T$. 

The same technique works for the vertex-type $[4,4,4,n]$ for $n>4$. There is a semi-regular tiling with this vertex-type which has an infinite collection $\mathcal{R}$ of bi-infinite rows of squares. 

Now consider a subset $\mathcal{S}$ of the collection of rows $\mathcal{R}$ that are ``sufficiently far apart", that is, each pair of rows in the subset are disjoint, and there is no hexagon having two sides belonging to the two rows in the pair. 
The relative shift as above can then be performed simultaneously for the rows in $\mathcal{S}$;  for each row, the change in the tiling is shown below (Figure 12.) The assumption of the rows being ``sufficiently far apart" ensures that these shifts are independent of each other. Since there are infinitely many such subsets of $\mathcal{R}$ that are not equivalent under the symmetries of $T$, we obtain infinitely many distinct tilings. Once again, the fact that they are distinct can be shown by producing local configurations that are unique to each tiling.

\begin{figure}[h]
  \centering
  \includegraphics[scale=.27]{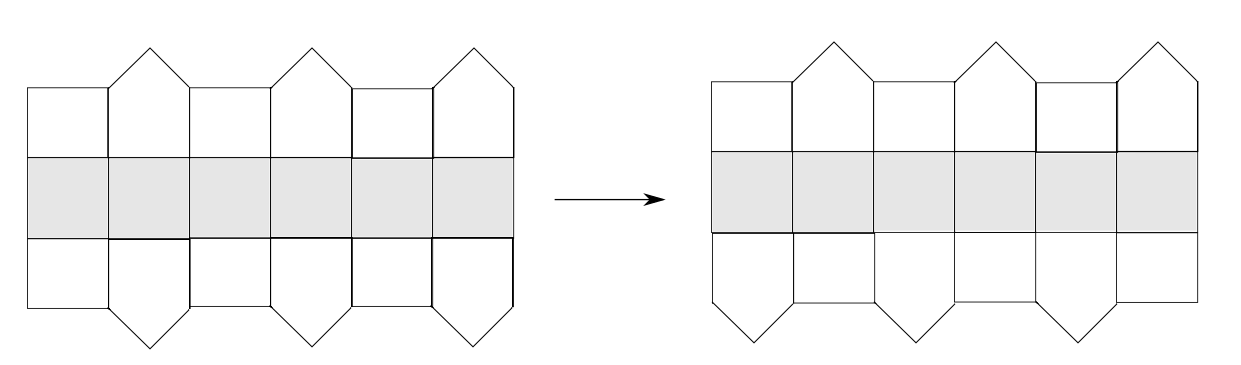}\vspace{.1in}
  \caption{A relative shift of the tiles on either of the row $R_\gamma$ (shown shaded) produces a different tiling with the same vertex-type $[4,4,4,5]$.}
\end{figure}

\subsection*{Other examples} 

Note that our construction in \S2 and \S3  can be done starting with any initial tiled surface $X_0$  that satisfies Property 1 (in the case  that no tile is triangular and $d\geq4$) or Property $1^\prime$ (in the case $d\geq 6$) and Property 2, together with the property that each interior vertex has the same vertex-type $\mathsf{k} = [k_1,k_2,\ldots ,k_d]$.  


 If $\mathsf{k}$ satisfies the hypotheses of Theorem \ref{thm2}, but has a pair $xy$ of consecutive elements which can be completed to the same cyclic tuple in two \textit{different} ways, and these choices show up while completing the fans, then the final semi-regular tilings could be different. For an example, when the vertex-type is $\mathsf{k} = [4, 3,3,3,4,3]$, the pair $33$ can be continued both as $[\textbf{3},\textbf{3},3, 4,3,4]$ as well as $[\textbf{3},\textbf{3},4,3,4,3]$, and this choice, used judiciously in the tiling construction, gives rise to two distinct semi-regular tilings -- see \url{https://en.wikipedia.org/wiki/Snub_order-6_square_tiling}.   



\section{Degree-3 tilings: Proof of Theorem \ref{thm0}}

In this section we prove Theorem \ref{thm0}.  Our proof relies on the following result of \cite{Edmondsetal} that was already mentioned in the Introduction:

\begin{thm}[Theorem 1.3 of \cite{Edmondsetal}]   \label{Kul2} 
Let $M$ be a closed orientable surface, and let $m_1, \dots, m_d$, $R$, $E$, $V_1, \dots, V_d$ be positive integers satisfying 
\begin{align*}
2E=dR; ~ 2m_iV_i=R, ~ i=1 ..... d; \mbox{ and } R-E+ \sum_{i=1}^{d}V_i =\chi(M). 
\end{align*}
Then there is a tiling of $M$ into $R$ \, $d$-gons, with $E$ edges, and $\sum_i V_i$ vertices,  $V_i$ of valence $2m_i$ $(i=1, \dots, d)$, such that each region has vertices of valence $2m_1, \dots, 2m_d$, up to cyclic order.
\end{thm}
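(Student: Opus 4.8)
\textit{Proposal.} The plan is to realize the tiling as the quotient of a ``universal'' reflection tiling of a constant-curvature model surface by a torsion-free subgroup of finite index, thereby reducing the theorem to a single group-theoretic existence statement (the numerical identities $2E=dR$, $2m_iV_i=R$ and $R-E+\sum_iV_i=\chi(M)$ are exactly the obvious face/edge/vertex counts, so they are necessary, and the content is sufficiency). Let $D$ be a $d$-gon with vertices $P_1,\dots,P_d$ in cyclic order, and form the abstract reflection group $\Delta=\langle r_1,\dots,r_d\mid r_j^2=1,\ (r_{j-1}r_j)^{m_j}=1\rangle$, where $r_j$ is the reflection in the side $P_jP_{j+1}$ and the relation at $P_j$ (the common point of the sides fixed by $r_{j-1}$ and $r_j$) records an interior angle $\pi/m_j$. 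According as $2-d+\sum_i 1/m_i$ is positive, zero or negative, $\Delta$ acts properly discontinuously on $\tilde X=S^2,\ \mathbb E^2$ or $\mathbb H^2$ with fundamental domain $D$, and the orbit of $D$ gives a tiling $\tilde T_0$ of $\tilde X$ by $d$-gons in which $2m_j$ copies of $D$ meet around each image of $P_j$. Thus $\tilde T_0$ already has the prescribed local structure: each tile is a $d$-gon, and around each tile the vertex-valences are $2m_1,\dots,2m_d$ up to cyclic order.

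Next I would pass to the orientation-preserving index-two subgroup $\Delta^+=\langle x_1,\dots,x_d\mid x_j^{m_j}=1,\ x_1\cdots x_d=1\rangle$, where $x_j=r_{j-1}r_j$ (the product telescopes to the identity, so the relation is correct); this is the orbifold fundamental group of the sphere $\mathcal O$ carrying $d$ cone points of orders $m_1,\dots,m_d$. If $\Gamma\le\Delta^+$ is torsion-free of finite index $N$, then $\Gamma$ acts freely on $\tilde X$, the quotient $M=\tilde X/\Gamma$ is a closed \emph{orientable} surface, and $\tilde T_0$ descends along the covering $\tilde X\to M$ to a tiling $T$ of $M$ with the identical local structure (valences are preserved since $\tilde X\to M$ is a local homeomorphism). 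Every torsion element of $\Delta^+$ is conjugate to a power of some $x_j$ and fixes a cone point, so freeness of the $\Gamma$-action is equivalent to torsion-freeness of $\Gamma$. Counting then gives $R=[\Delta:\Gamma]=2N$ tiles, $2E=dR$, and the number of type-$j$ vertices equal to the number of $\langle x_j\rangle$-orbits on $\Delta^+/\Gamma$, namely $V_j=N/m_j$, each of valence $2m_j$. Finally, multiplicativity of the orbifold Euler characteristic yields $\chi(M)=N\bigl(2-d+\sum_i 1/m_i\bigr)=R-E+\sum_i V_i$, so the data produced match the hypotheses exactly. Hence the theorem follows once such a $\Gamma$ of index $N=R/2$ is produced (the hypotheses force $m_j\mid N$, since $V_j=N/m_j=R/(2m_j)$ is an integer).

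The heart of the proof, and the step I expect to be the main obstacle, is therefore the following realization lemma: there exist permutations $g_1,\dots,g_d\in S_N$ with each $g_j$ a product of $N/m_j$ disjoint $m_j$-cycles, with $g_1\cdots g_d=\mathrm{id}$, and with $\langle g_1,\dots,g_d\rangle$ transitive on $\{1,\dots,N\}$. Given such a tuple, the assignment $x_j\mapsto g_j$ respects both defining relations of $\Delta^+$ (the $m_j$-cycle structure gives $g_j^{m_j}=\mathrm{id}$, and the product relation is imposed), so it defines a transitive homomorphism $\rho\colon\Delta^+\to S_N$; the stabilizer $\Gamma$ of a point has index $N$, and it is torsion-free because for each $j$ and each $1\le k<m_j$ the power $g_j^{k}$ is a product of cycles of length $m_j/\gcd(k,m_j)\ge 2$, hence fixed-point-free, so no conjugate of a nontrivial $x_j^{k}$ lies in $\Gamma$. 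This is precisely the construction of a \emph{connected} branched cover of the sphere with $d$ branch points and prescribed cyclic ramification, so I would invoke (or reprove by an explicit stitching of cycles) the standard fact that such a tuple exists whenever the integrality and sign constraints hold. The only genuine obstruction is the abelianization/sign condition $\prod_j\operatorname{sgn}(g_j)=1$, i.e. $\sum_j (m_j-1)V_j\equiv 0\pmod 2$; but $\sum_j(m_j-1)V_j=E-\sum_j V_j=R-\chi(M)$, and since $R=2N$ is even and $\chi(M)=2-2g$ is even, this parity holds automatically. The explicit construction of the $g_j$ (first enforce the product relation by setting $g_d=(g_1\cdots g_{d-1})^{-1}$, then repair transitivity and cycle types using the freedom available for $d\ge 3$) is routine, and the degenerate cases (some $m_j=1$, giving $g_j=\mathrm{id}$ and valence-$2$ vertices) are handled directly.

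I would close by recording two points of interpretation. First, ``tiling'' here is read as a cellular map (a $2$-cell embedding), so no claim of geometric embeddedness of individual cells is required; if embedded cells are wanted one passes to a further cover, at the cost of changing $\chi$. Second, in the spherical case $2-d+\sum_i 1/m_i>0$ the group $\Delta^+$ is finite, which forces $\Gamma$ trivial and $M=\tilde X=S^2$ with $T=\tilde T_0$; this is consistent, since the hypotheses then force $\chi(M)=2$ and $N=|\Delta^+|$.
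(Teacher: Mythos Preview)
The paper does not prove this statement: it is quoted as Theorem~1.3 of \cite{Edmondsetal} and used as a black box to obtain Corollary~\ref{eventype}. There is no proof in the present paper to compare your proposal against.

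That said, your outline is essentially the architecture of the Edmonds--Ewing--Kulkarni argument: realize the target tiling as a finite quotient of the universal Dyck tiling, reduce to finding a torsion-free subgroup of index $N=R/2$ in the orientation-preserving Dyck group $\Delta^{+}$, and translate that into the existence of a transitive permutation tuple $(g_1,\dots,g_d)$ in $S_N$ with prescribed cycle types and product one. Your bookkeeping (the match between $N$, the face/edge/vertex counts and $\chi(M)$; the torsion-freeness criterion via fixed-point-freeness of $g_j^{k}$; the parity check $\sum_j(m_j-1)V_j\equiv 0\pmod 2$) is correct.

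The one genuine gap is the step you yourself flag as ``the heart of the proof'': the realization lemma. You propose to ``invoke (or reprove by an explicit stitching of cycles) the standard fact'' that such a tuple exists whenever the integrality and sign constraints hold. This is precisely the technical content of \cite{Edmondsetal}, and it is not an off-the-shelf citation: the general Hurwitz existence problem for branched covers of the sphere has sporadic exceptions, and one needs an actual construction (or a careful case analysis) to show that the uniform-ramification data here are always realizable with transitive monodromy. In the hyperbolic range this can indeed be done by explicit cycle splicing, but until that argument is written out your proposal is a correct reduction to the main lemma of the cited paper rather than an independent proof.
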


In what follows,  the \textit{dual} of a tiling (or map) $X$ on a surface $M$, is defined as follows:
Take a new vertex $v_F$ in the interior of each face $F$ of $X$. If $e$ is an edge of $X$ then $e$ is the intersection of exactly two faces $F$ and $G$. Join $v_F$ with $v_G$ by an edge $\tilde{e}$ inside $F\cup G$. We choose the edges such that intersection of two edges $\tilde{d}$ and $\tilde{e}$ is either empty or a common end point. Then the new vertices and new edges define a map on $M$ called the {\em dual} of $X$ and is denoted by $X^{\ast}$. 

This combinatorial definition coincides with the notion of the \textit{dual} of a semi-regular tiling introduced in the proof of Corollary \ref{cor1} at the end of \S4. 


\begin{cor} \label{cor2} \label{cor_Kul2} \label{eventype} 
For $d\geq 3$, consider integers $m_1, \dots, m_d \geq 2$, such that the vertex-type $\mathsf{k} = [2m_1, \dots, 2m_d]$ satisfies the angle-sum condition \eqref{asum}.  Then there exists a semi-regular tiling on the hyperbolic plane of vertex-type $\mathsf{k}$. 
\end{cor}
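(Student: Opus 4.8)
The plan is to derive the corollary from Theorem \ref{Kul2} by choosing suitable parameters, then dualizing, passing to the universal cover, and finally geometrizing via Lemma \ref{cor-leml}.

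First I would fix the numerics. Since each $m_i\geq 2$, I set $L=\mathrm{lcm}(2m_1,\dots,2m_d)$ and take $R=4L$, $E=dR/2=2dL$, and $V_i=R/(2m_i)=2L/m_i$ for $1\leq i\leq d$; these are positive integers and satisfy $2E=dR$ and $2m_iV_i=R$. Using that $\alpha(\mathsf{k})=\sum_i\frac{2m_i-2}{2m_i}=d-\sum_i\frac1{m_i}$, one computes
\[
R-E+\sum_{i=1}^{d}V_i \;=\; 2L\Big(2-d+\sum_{i=1}^{d}\tfrac1{m_i}\Big)\;=\;-2N, \qquad N:=L\big(\alpha(\mathsf{k})-2\big).
\]
Because $m_i\mid L$, the number $N$ is a positive integer (positivity being exactly the angle-sum condition \eqref{asum}), so I let $M$ be the closed orientable surface of genus $1+N\geq 2$; then $\chi(M)=-2N$ and all hypotheses of Theorem \ref{Kul2} are met.

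Next, Theorem \ref{Kul2} yields a tiling $\mathcal{X}$ of $M$ by $d$-gons whose vertices, around each face, have valences $2m_1,\dots,2m_d$ in cyclic order. I would pass to the dual map $\mathcal{X}^{\ast}$ on $M$: each of its vertices, corresponding to a $d$-gon of $\mathcal{X}$, has valence $d$; each of its faces, dual to a valence-$2m_i$ vertex of $\mathcal{X}$, is a $2m_i$-gon; and the cyclic order of the faces of $\mathcal{X}^{\ast}$ around any of its vertices is $[2m_1,\dots,2m_d]=\mathsf{k}$. Thus every vertex of $\mathcal{X}^{\ast}$ has vertex-type $\mathsf{k}$. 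Since $\chi(M)<0$, the universal cover of $M$ is homeomorphic to $\mathbb{R}^2$, and fixing a hyperbolic metric on $M$ identifies it with $\mathbb{H}^2$; the lift $\widetilde{\mathcal{X}^{\ast}}$ of $\mathcal{X}^{\ast}$ is then a locally finite topological tiling of $\mathbb{H}^2$ in which every vertex still has vertex-type $\mathsf{k}$. Finally, since $\mathsf{k}$ satisfies \eqref{asum} by hypothesis, Lemma \ref{cor-leml} applies to $\widetilde{\mathcal{X}^{\ast}}$ and produces an equivalent semi-regular tiling of $\mathbb{H}^2$ by regular geodesic polygons of vertex-type $\mathsf{k}$, which is what is required.

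The step I expect to need the most care is verifying that $\widetilde{\mathcal{X}^{\ast}}$ really is a tiling in the sense of this paper, i.e.\ that the lifted faces are embedded closed disks and that two tiles which meet do so in exactly one vertex or one edge; this follows from the covering map being a local homeomorphism (so the link of each vertex upstairs is a single cycle, exactly as downstairs) together with the simple connectivity of $\mathbb{H}^2$, which rules out two lifted tiles sharing two edges or bounding a bigon, but I would spell it out carefully. One can also bypass the universal-cover step entirely: a compact convex hyperbolic $d$-gon $\Pi$ with interior angles $\pi/m_1,\dots,\pi/m_d$ exists precisely because $\sum_i\pi/m_i<(d-2)\pi$, which is equivalent to $\alpha(\mathsf{k})>2$, and since all its angles are submultiples of $\pi$ the group generated by reflections in its sides is discrete and tiles $\mathbb{H}^2$ by copies of $\Pi$; its dual tiling, with vertices at the incenters of the copies of $\Pi$ and geodesic edges joining incenters across shared sides, then has all vertices of valence $d$ and all faces regular $2m_i$-gons (regular because the $2m_i$ incenters around a vertex of angle $\pi/m_i$ are equidistant from it and equally spaced), hence is a semi-regular tiling of vertex-type $\mathsf{k}$.
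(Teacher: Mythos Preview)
Your argument is correct and follows essentially the same route as the paper's: pick parameters so that Theorem \ref{Kul2} produces a $d$-gonal tiling of a closed orientable surface of negative Euler characteristic, dualize to obtain a map with vertex-type $\mathsf{k}$, pass to the universal cover $\mathbb{H}^2$, and invoke Lemma \ref{cor-leml} to geometrize. The only cosmetic differences are that the paper takes $R=4m_1\cdots m_d$ rather than your $4\,\mathrm{lcm}(2m_i)$, and applies Lemma \ref{cor-leml} on $M$ before lifting (your order---lift first, then apply the lemma on $\mathbb{H}^2$---actually matches the lemma's stated hypothesis more cleanly); your closing reflection-group alternative is also valid and is precisely the Dyck-group construction alluded to in the paper's introduction.
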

\begin{proof} It can be checked that the angle-sum condition reduces to $\frac{1}{m_{1}} +  \cdots + \frac{1}{m_{d}} < d-2$. 
Consider the positive  integers $R = 4m_1\cdots m_d$,  $E=dR/2$, $V_i=R/(2m_i)$, $1\leq i\leq d$. Let   $\chi = R - E + \sum_{i=1}^d V_i$. Then $\chi = R(1 - \frac{d}{2} + \frac{1}{2}\sum_{i=1}\frac{1}{m_i}) = -\frac{R}{2}((d-2) - \sum_{i=1}^d\frac{1}{m_i})$. So, $\chi$ is an a negative even integer. Let $M$ be the orientable closed surface with Euler characteristic $\chi(M) = \chi$. 
Then the surface $M$, and integers $m_1, \dots, m_d$, $R$, $E$, $V_1, \dots, V_d$ satisfy the hypothesis of Theorem \ref{Kul2}. Therefore, by Theorem \ref{Kul2}, there exists a tiling $X$ of $M$ into $R$ regular $d$-gonal faces, with $E$ edges, and $\sum_{i=1}^d V_i$ vertices, $V_i$ of valence $2m_i$ $(i=1, \dots, d$,  with the understanding that the $m_i$s need not be distinct), such that each face has vertices of valences $2m_1, \dots, 2m_d$, up to cyclic order. 

Let $X^{\ast}$ be the dual of $X$, as defined above. Then by Lemma \ref{cor-leml}, $X^{\ast}$ has a geometric realization as a semi-regular tiling of degree $d$ and of vertex-type $[2m_1, \dots, 2m_d]$ on the closed surface $M$. Since $\chi(M)$ is negative, the hyperbolic plane is the unversal covering of $M$. The semi-regular tiling $X^{\ast}$ on $M$ then lifts to a semi-regular tiling $Y :=\widetilde{X^{\ast}}$  of the same vertex-type on the hyperbolic plane. 
\end{proof}

\begin{proof}[Proof of Theorem \ref{thm0}] 

Our proof divides into several cases which we handle separately.\vspace{.1in}

\textit{Case 1: $\mathsf{k} = [p,p,p]$.} Note that for the angle sum \eqref{asum} to be satisfied, we have $p\geq 7$.  Semi-regular tilings with this vertex-type $\mathsf{k}$ exist, as they are dual to the semi-regular tilings $[3^p]$ which exist by the Fuchsian triangle-group construction mentioned in the introduction.  \vspace{.1in}

\begin{figure}
  \centering
  \includegraphics[scale=.35]{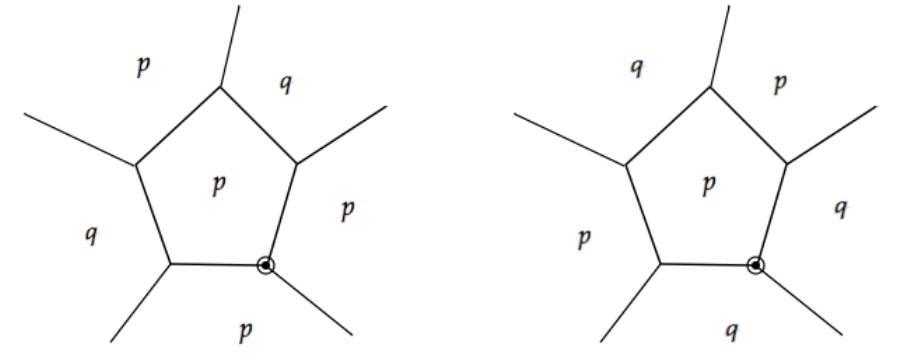}\vspace{.1in}
  \caption{When $p$ is odd, there cannot be a semi-regular tiling with vertex-type $[p,p,q]$: the offending vertex is circled in the two possibilities.}
\end{figure}

\textit{Case 2: $\mathsf{k} = [p,p,q]$ where $p\neq q$, and $p$ is odd.}  In this case, suppose there is a  semi-regular tiling with vertex-type $\mathsf{k}$.  Consider a $p$-gon in this tiling, with vertices $v_0, v_1,\ldots v_{p-1}$, and edges $e_i$ between $v_i$ and $v_{i+1}$ for $0\leq i\leq p-1$ (considered modulo $p$).  Since each of these vertices has degree $3$, the edges  alternately share an edge with a $p$-gon and $q$-gon, respectively. However, if $p$ is odd, then there is a vertex with three $p$-gons or two $q$-gons and a $p$-gon around it, which contradicts the fact that the vertex-type is $\mathsf{k}$. (See Figure 13.) Thus, there can be no semi-regular tiling with vertex-type $\mathsf{k}$.  This argument also appears in \cite{Datta1b}, Lemma 2.2 (i), in the context of maps on surfaces.\vspace{.1in}

\textit{Case 3: $\mathsf{k} = [p,p,q]$ where $p\neq q$, and $p$ is even.}  Let $p=2n$. Then the angle-sum in \eqref{asum} can be easily seen to yield that $\frac{1}{n} + \frac{1}{q} < \frac{1}{2}$. Note that this is exactly the same condition that implies the existence of an $[n^q]$ tiling. We can in fact construct a semi-regular tiling with vertex-type $\mathsf{k}$ by modifying an $[n^q]$ tiling $T_0$ in the following way: replace each vertex in $T_0$ by a $q$-gon, which has vertices along the edges of $T$. The tiles of $T_0$ are now $2n$-gons, since each such tile acquires an extra edge from the $q$-gon added at each vertex, and there are $n$ vertices. (This is well-known procedure  called ``truncation" -- see Chapter VIII of \cite{Coxeter}.) Although this construction is a \textit{topological tiling}, we can replace them by regular hyperbolic polygons since the angle-sum condition is satisfied (\textit{c.f.} Lemma \ref{cor-leml}).  (See Figure 14.) \vspace{.1in}

\begin{figure}
  \centering
  \includegraphics[scale=.3]{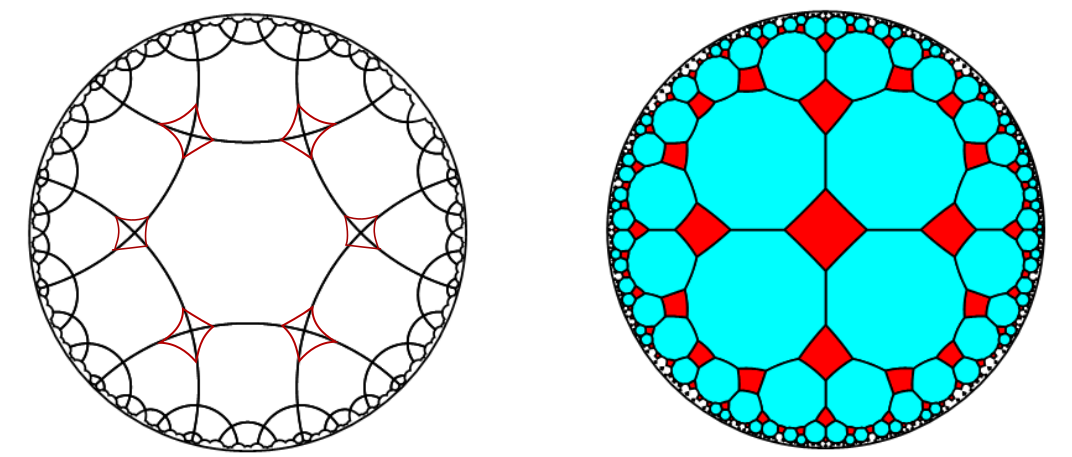}\vspace{.1in}
  \caption{ A  semi-regular tiling with vertex-type $\mathsf{k} = [12,12,4]$ (figure on the right) is obtained by ``truncating" each tile of a semi-regular tiling with vertex-type $[6^4]$  (figure on the left). }
  \end{figure}

\textit{Case 4: $\mathsf{k} = [p,q,r]$ where $p,q,r$ are distinct.}  By Lemma 2.2 (ii) of \cite{Datta1b}, all three $p$, $q$ and $r$ have to be even, say $p=2\ell$, $q=2m$, $r=2n$, where $\ell, m, n\geq 2$ are distinct.
In this case, the angle sum condition is satisfied if $\frac{1}{\ell} + \frac{1}{m} + \frac{1}{n} < 1$.
By Corollary \ref{cor_Kul2}, such a semi-regular tiling exists on the hyperbolic plane. \vspace{.1in}

This covers all possibilities for a triple $\mathsf{k}$, and completes the proof. 
\end{proof}


\medskip

\noindent \textbf{Acknowledgements.}  The first author is supported by SERB, DST (Grant No. MTR/2017/ 000410). The second author acknowledges the SERB, DST (Grant no. MT/2017/000706) and the Infosys Foundation for their support.  The authors are also supported by the UGC Centre for Advanced Studies. The authors thank the referee for several suggestions that significantly improved this article. Several figures in this article were made using the {\tt L2Primitives} and {\tt Tess} packages for Mathematica, available online at the Wolfram Library Archive.

\bibliographystyle{alpha}
\bibliography{Tiling-refs-2}

\end{document}